\documentclass[aos,reqno]{imsart}

\RequirePackage{amsmath,amsfonts,amssymb,amsthm}
\RequirePackage[numbers]{natbib}
\RequirePackage{graphicx}
\RequirePackage{mathrsfs}
\RequirePackage{dsfont}
\RequirePackage{booktabs}
\RequirePackage{nicefrac}
\RequirePackage{algorithm}
\RequirePackage{algorithmic}
\RequirePackage{url}
\RequirePackage{xcolor}
\RequirePackage[colorlinks,citecolor=blue,urlcolor=blue,linkcolor=blue,breaklinks=true]{hyperref}

\RequirePackage[nameinlink,capitalize,noabbrev]{cleveref}
\RequirePackage{autonum}

\definecolor{darkmidnightblue}{HTML}{003366}
\hypersetup{
    citecolor=darkmidnightblue,
    urlcolor=darkmidnightblue,
    linkcolor=darkmidnightblue
}

\startlocaldefs

\newcommand\bs{\boldsymbol}

\newcommand\bfS{\mathbf S}

\newcommand\bL{\boldsymbol L}

\newcommand\bY{\boldsymbol Y}

\newcommand\bX{\boldsymbol X}

\newcommand\bW{\boldsymbol W}

\newcommand\bh{\boldsymbol h}

\newcommand\by{\boldsymbol y}

\newcommand\bse{\boldsymbol e}
\newcommand\bu{\boldsymbol u}

\newcommand\bxi{\boldsymbol\xi}

\newcommand\bfI{\mathbf I}
\newcommand\bfA{\mathbf A}

\newcommand\bfX{\mathbf X}

\DeclareMathOperator{\tr}{tr}

\newcommand\bzeta{\bs \zeta}

\newcommand\btheta{\boldsymbol\theta}
\newcommand\bvartheta{\boldsymbol\vartheta}

\newcommand\bSigma{\boldsymbol\Sigma}

\def\wass{\mathsf{W}}
\def\kappav{\varkappa_{\textsf{av}}}
\def\kappamax{\varkappa_{\infty}}
\def\kapparms{\varkappa_{\textsf{rms}}}

\def\Mav{M_{\textsf{av}}}
\def\Mmax{M_{\infty}}
\def\Mtwo{M_{\mathrm{rms}}}
\def\mbar{\overline m}
\def\Mmaxbar{\overline M_{\infty}}
\def\Mavbar{\overline M_{\textsf{av}}}

\newtheorem{theorem}{Theorem}
\newtheorem{proposition}{Proposition}

\newtheorem{lemma}{Lemma}

\theoremstyle{definition}

\newtheorem{remark}{Remark}

\crefname{cor}{Corollary}{Corollaries}
\crefname{corollary}{Corollary}{Corollaries}
\crefname{condition}{Condition}{Conditions}

\endlocaldefs

\begin{document}

\begin{frontmatter}

\title{Improved Guarantees for Langevin Monte Carlo with Average Smoothness}
\runtitle{LMC: guarantees with improved condition numbers}

\begin{aug}
\author[A]{\fnms{Arnak S.}~\snm{Dalalyan}\ead[label=e1]{arnak.dalalyan@ensae.fr}}

\author[B]{\fnms{Avetik}~\snm{Karagulyan}\ead[label=e2]{avetik.karagulyan@cnrs.fr}}
\address[A]{CREST, ENSAE Paris, Institut Polytechnique de Paris and MBZUAI\printead[presep={,\ }]{e1}}
\address[B]{L2S, CNRS, Centrale Sup\'elec, Universit\'e Paris-Saclay\printead[presep={,\ }]{e2}}
\end{aug}

\begin{abstract}
We establish improved nonasymptotic bounds for Langevin 
Monte Carlo in the strongly log-concave setting, when 
the error is measured by the Wasserstein distance.  
The main result shows that the discretization error is governed by an average coordinate-wise smoothness
constant, rather than by the usual global smoothness
constant. The proof is short and probabilistic, and 
relies on a refined use of the synchronous coupling. 
We further show that the same ideas lead to improved
bounds for variable step sizes, for potentials whose
Laplacian is Lipschitz-continuous, and for finite-sum
problems sampled by stochastic-gradient Langevin dynamics
with fixed point control variates. In the Laplacian-smooth
case, the usual Hessian-Lipschitz contribution is replaced
by a weaker trace-type third-order smoothness quantity.
In the finite-sum setting, the resulting SGLD bound
improves the dependence on the root mean square smoothness
of the component functions. Applications to generalized
linear models with Gaussian design show that these
refinements can yield substantial, dimension-dependent
improvements over previously known bounds, especially for
correlated covariates.
\end{abstract}

\begin{keyword}[class=MSC]
\kwd[Primary ]{MSC code to be added}
\kwd[; secondary ]{MSC code to be added}
\end{keyword}

\begin{keyword}
\kwd{Langevin Monte Carlo}
\kwd{stochastic gradient Langevin dynamics}
\kwd{Wasserstein distance}
\kwd{strongly convex potentials}
\end{keyword}

\end{frontmatter}

\section{Introduction}

Sampling from a probability distribution with density
proportional to $e^{-f}$ is a central task in statistics,
machine learning and applied probability. In Bayesian
statistics, this problem arises when $f$ is the negative
log-posterior density; in statistical learning, $f$ often
corresponds to an empirical risk, possibly augmented by a
regularization term. In high-dimensional problems, exact
sampling is rarely available, and one must rely on
computationally tractable Markov chain Monte Carlo methods.
Among these, Langevin Monte Carlo has become one of the
most widely studied algorithms, due to its simplicity and
its explicit use of the geometry of the target distribution.

The Langevin Monte Carlo (LMC) algorithm is obtained by applying
the Euler discretization to the overdamped Langevin
diffusion. When $f$ is strongly convex and has Lipschitz
gradient, this diffusion is contractive and has the target
distribution as its invariant distribution. The core 
challenge is then to quantify the additional error introduced
by discretization. A large body of work has established
nonasymptotic guarantees for LMC in Wasserstein distance and
related metrics, under various assumptions on the potential
and the step-size sequence
\citep{DalalyanColt,dalalyan2019user,durmus2019,
durmus2019analysis}. These results have played an important
role in clarifying the dependence of LMC on the dimension,
the precision and the conditioning of the target.

A common feature of the existing guarantees is their
dependence on the global smoothness constant of the
potential. If $f$ is $m$-strongly convex and has
$\Mmax$-Lipschitz gradient, then the relevant condition
number is usually taken to be $\Mmax/m$. This worst-case
quantity is natural from the point of view of deterministic
optimization, but it can be pessimistic for sampling. Indeed,
the discretization error of LMC is partly created 
by the injection
of isotropic Gaussian noise at each step. It is therefore
reasonable to expect this error to depend not only on the
largest curvature direction, but also on an average measure
of curvature. This distinction becomes especially important
in high-dimensional problems in which the global smoothness
constant is inflated by a small number of highly curved
directions.

This paper validates this intuition. We prove that, for 
standard LMC in the
strongly log-concave setting, the leading discretization
error is governed by an average coordinate-wise smoothness
constant rather than by the global smoothness constant.
Consequently, the maximal condition number appearing in
previous guarantees can be replaced, in the discretization
term, by an average condition number. The proof is based on
a short refinement of the synchronous coupling argument. It
therefore remains close to the probabilistic intuition
behind earlier analyses of LMC, while yielding sharper
dependence on the conditioning.

\subsection{Contributions and overview}

Our first contribution is a sharper Wasserstein bound 
for LMC under strong log-concavity: we show that the 
discretization error is governed by the average 
coordinate-wise smoothness constant $\Mav$ rather 
than the global constant $\Mmax$. This refinement 
follows from a concise probabilistic argument based 
on synchronous coupling.

Our second contribution demonstrates that this 
average-smoothness perspective sharpens several 
related guarantees. For variable step sizes, it yields a
bound with improved dependence on the conditioning and
without the logarithmic dependence on the target precision.
When the Laplacian of the potential is Lipschitz-continuous,
we obtain the well-known linear-in-the-step-size 
discretization bound in which the usual Hessian-Lipschitz contribution is replaced 
by a weaker trace-type third-order smoothness quantity.

Our third contribution addresses finite-sum potentials. 
For SGLD with fixed point control variates, we obtain an
improved nonasymptotic Wasserstein bound depending on the
root mean square smoothness of the component functions.

Finally, we quantify these improvements on posteriors 
of generalized linear models with
Gaussian design. This example reveals that the improvements
can be dimension-dependent: for correlated covariates, the
gain is of order $p$ for the Lipschitz-gradient LMC bound
and of order $\sqrt p$ under the Laplacian-smoothness
assumption.

\subsection{Related work}

Langevin dynamics has its origins in the physical modeling
of Brownian motion, going back to the foundational works of
\cite{einstein1905,langevin1908} and to the
Ornstein--Uhlenbeck process \citep{uhlenbeck1930}. The 
Langevin diffusion provides a natural
continuous-time dynamics whose invariant distribution has
density proportional to $e^{-f}$. This connection between
diffusions, equilibrium distributions and stochastic
dynamics has long been central in statistical physics, and
later became a basic tool for the design of Markov chain
Monte Carlo algorithms.

The use of Langevin-type dynamics in computational
statistics developed alongside the general MCMC methodology
initiated by \cite{metropolis1953} and \cite{hastings1970}.
Gradient-based MCMC methods were promoted in statistics in
works such as \cite{grenander1994representations,
besag1994comments}, and the Metropolis-adjusted Langevin
algorithm was subsequently studied in depth. In particular,
\cite{roberts1996exponential} analyzed ergodicity properties
of Langevin algorithms and their discrete approximations,
while \cite{roberts1998optimal} studied optimal scaling
limits for Langevin algorithms. Much of this classical
literature is asymptotic in nature, focusing on invariance,
ergodicity, scaling limits and long-time behavior.

More recently, a nonasymptotic theory of Langevin Monte Carlo (LMC) has emerged. Initial contributions in this direction established Wasserstein bounds for strongly log-concave targets \cite{DalalyanColt,dalalyan2019user,durmus2019} by constructing probabilistic couplings between the continuous Langevin diffusion and its Euler discretization.
An alternative, analytical perspective interprets LMC as an optimization problem over the space of probability measures, with the Kullback--Leibler divergence serving as the objective functional \citep{cheng2018kl,wibisono2018sampling,durmus2019analysis}. This viewpoint has also yielded KL and R\'enyi divergence guarantees under isoperimetric assumptions \cite{vempala2019rapid}. Notably, \cite{durmus2019analysis} successfully leveraged this optimization-based approach to derive significantly sharper bounds for LMC than those obtained via coupling arguments.
While the present work adopts the coupling-based framework, it demonstrates that the resulting bounds can be further refined. Specifically, we improve upon previous analyses by replacing worst-case smoothness quantities with more nuanced average smoothness measures.

Several works have investigated refinements of the basic
Langevin scheme. Underdamped Langevin algorithms and kinetic variants can
improve the dependence on dimension or accuracy in some
regimes \citep{cheng2018a,dalalyan_riou_2018,
monmarche2021high,zhang2023improved}. 
Higher-order discretizations and randomized midpoint
schemes provide another route to improved guarantees under
additional or comparable smoothness assumptions
\citep{shen2019randomized,Mou21,yu2025parallel}.
Nonsmooth and constrained variants have also been studied
\citep{bubeck2018sampling,lehec2023langevin}. Proximal and 
mirror-Langevin methods have
been developed for related purposes
\citep{brosse2017proximal,durmus2018efficient, 
chewi2020exponential,ahn2021efficient,chen22c}. These works 
modify the algorithmic setting or weaken the smoothness
assumptions.
By contrast, our results sharpen the analysis of the
LMC algorithm itself by identifying more refined
smoothness quantities that govern its discretization error.

Stochastic-gradient Langevin dynamics was introduced in
\cite{WellingT11} as a scalable alternative to LMC for
Bayesian learning with large data sets. Its asymptotic
behavior with decreasing step sizes was studied in
\cite{teh2016}. For constant step sizes, the additional
noise due to stochastic gradients may create a nonnegligible
bias, motivating the use of variance reduction and control
variates \citep{BrosseDM18,Nagapetyan,Baker19}. Other
variance-reduced stochastic-gradient MCMC methods, including
SVRG- and SAGA-type Langevin algorithms, have been proposed
in \cite{dubey2016,chatterji2018,zou2019}. Our SGLD result
is closest in spirit to fixed point control-variate methods,
but gives a sharper dependence on the smoothness of the
component functions.

\subsection{Organization of the paper}

The rest of the paper is organized as follows.
\Cref{Sec:LMC} presents the main result for the standard
LMC algorithm and discusses its
consequences for mixing times and variable step sizes.
\Cref{Sec:proof} gives the probabilistic proof of
this result, based on a refinement of the synchronous
coupling argument. In \Cref{Sec:LLC}, we turn to
potentials whose Laplacian is Lipschitz-continuous and
derive a sharper discretization bound involving a
trace-type third-order smoothness quantity. The
finite-sum setting and the corresponding bounds for SGLD
with control variates are studied in
\Cref{Sec:SGLD}. In \Cref{Sec:GLM}, we evaluate the
smoothness constants appearing in the bounds for
generalized linear models, thereby
quantifying the improvements over previous guarantees.
\Cref{Sec:conclusion} concludes the paper, and the
technical proofs postponed from the main text are collected
in \Cref{Sec:proofs}.

\subsection{Notation and conventions}\label{ssec:notation}

Throughout the paper, $p$ denotes the dimension of the
state space and $\|\cdot\|_2$ denotes the Euclidean norm
on $\mathbb R^p$. For a matrix $\bfA$, we write
$\|\bfA\|_{\rm op}$ and $\|\bfA\|_{\mathsf F}$ for its
operator and Frobenius norms, respectively. We denote by
$\bse_j$ the $j$th vector of the canonical basis of
$\mathbb R^p$. For a twice differentiable function $f$,
$\nabla f$, $\nabla^2 f$ and $\Delta f = \tr(\nabla^2 f)$
stand for its gradient, Hessian and Laplacian, respectively. 
The target distribution is denoted by $\pi$ and has
density proportional to $\exp(-f)$ with respect to the
Lebesgue measure. If $\nu$ and $\mu$ are probability 
measures on $\mathbb R^p$, then $\wass_2(\nu,\mu)$ 
denotes their Wasserstein distance of order two. When 
$\bX$ and $\bY$ are random vectors, we use the shorthand 
notation $\wass_2(\bX,\bY) = \wass_2\big(\mathcal L(\bX)
,\mathcal L(\bY)\big)$. For a random vector $\bX$, we 
set $\|\bX\|_{\mathbb L_q} = \big(\mathbf E\|\bX\|_2^q 
\big){}^{1/q}$, $q\geqslant 1$. For a map $T:\mathbb R^p
\to\mathbb R^d$, its Lipschitz seminorm is  $\|T\|_{
\mathrm{Lip}} = \sup_{\btheta\neq\btheta'} \frac{\| 
T(\btheta)-T(\btheta')\|_2}{\|\btheta-\btheta'\|_2}$.
We use the notation $a\wedge b=\min(a,b)$,
$a\vee b=\max(a,b)$ and $a_+=a\vee0$. We also write
$\log_+(a)=\log(a)\vee0$. The main smoothness parameters 
are the global Lipschitz constant $\Mmax$ of $\nabla f$ 
and the average coordinate-wise smoothness constant 
$\Mav$, defined by
\begin{align}\label{coordM}
    \Mmax = \|\nabla f\|_{\rm Lip},\  
    \Mav = \frac1p\sum_{j=1}^p M_j,
    \ 
    M_j
    =\!\!\!
    \sup_{\btheta\in\mathbb R^p\!\!,\,a\in\mathbb R^*}
    \!\!\frac{2\big(f(\btheta+a\bse_j)-f(\btheta)
    -a\,\partial_j f(\btheta)\big)}{a^2}.
\end{align}
When $f$ is twice differentiable, $\Mmax$ controls the
largest eigenvalue of $\nabla^2 f(\btheta)$ uniformly in
$\btheta$, whereas $\Mav$ controls the average of the
coordinate-wise second derivatives. We write
$\kappamax = {\Mmax}/{m}$ and $\kappav = {\Mav}/{m}$ 
for the corresponding condition numbers. In the 
finite-sum setting $f=\sum_{i=1}^n f_i$, we also use
\begin{align}\label{eq:Mtwo}
    \Mtwo^2
    =
    \frac1n\sum_{i=1}^n
    \|\nabla f_i\|_{\mathrm{Lip}}^2,
    \qquad
    \kapparms
    =
    \frac{n\Mtwo}{m}.
\end{align}
Finally, when third-order smoothness is used, $M_2$ 
denotes the Lipschitz
seminorm of the Hessian with respect to the
operator norm, while $M_\Delta$ denotes the constant in
the condition $\sup_{\btheta\in\mathbb R^p} \|\nabla
\Delta f(\btheta) \|_2^2 \leqslant M_\Delta^2 p$. We set $\kappa_\Delta =
M_\Delta/m^{3/2}$ and $\kappa_2 = M_2/m^{3/2}$. 
\section{Improved upper bound for Langevin Monte-Carlo}
\label{Sec:LMC}

This section is devoted to the main result for the LMC
algorithm with exact gradient evaluations. We begin by 
stating the main theorem and outlining its proof,
postponing the proofs of several technical lemmas.  
We then  discuss the mixing time implied by the 
theorem, as well as its consequences in the case 
of varying step sizes. Finally, 
we provide proofs of the deferred lemmas. 

Langevin Monte Carlo is defined by the
sequence of random variables 
\begin{align}\label{LMC}
    \bvartheta_{k+1}^{\sf LMC} = \bvartheta_{k}^{\sf LMC} 
    - h_k \nabla f(\bvartheta_{k}^{\sf LMC})  + 
    \sqrt{2h_k}\,\bxi_k;\quad k\in\mathbb N.
\end{align}
Here, it is assumed that $(\bxi_k)_{k\in\mathbb N}$
is a sequence of independent standard Gaussian vectors,
independent of the initial condition $\bvartheta_0$, and 
that $(h_k)_{k\in\mathbb N}$ is a sequence of positive 
numbers referred to as step sizes. 

We assume that the potential function $f:\mathbb R^p\to
\mathbb R$ is strongly convex and that its gradient is
Lipschitz continuous. This is equivalent to assuming that
there exist constants $m,\Mmax\in (0,\infty)$ such that
\begin{align}\label{Lip:1}
    \frac{m}{2}\|\btheta' - \btheta\|^2\leqslant 
    f(\btheta') - f(\btheta) - \nabla f(\btheta)^\top 
    (\btheta' - \btheta)
    \leqslant \frac{\Mmax}{2}\|\btheta' - \btheta\|^2,\quad
    \forall \btheta,\btheta'\in \mathbb R^p.
\end{align}
A noteworthy feature of the analysis is that the constant
controlling the error of the LMC is not the global Lipschitz 
constant $\Mmax$, but rather the average coordinate-wise
smoothness constant $\Mav$ defined in \eqref{coordM}. Each
$M_j$ is bounded above by $\Mmax$. However, even the largest
of the $M_j$ can be significantly smaller than $\Mmax$, and this
is even more pronounced for the average smoothness $\Mav$. 
The difference between $\Mav$ and $\Mmax$ is particularly 
transparent when $f$ is twice differentiable. In this case,
$\Mmax$ provides a uniform upper bound on the largest eigenvalue 
of the Hessian matrix $\nabla^2 f(\btheta)$, whereas $\Mav$ 
controls the average of its eigenvalues, that is, the trace 
of the Hessian divided by $p$.

\subsection{The main result for the LMC with 
constant step-size}

The next theorem bounds the Wasserstein-2 error of LMC 
under the standard assumptions that $f$ is strongly convex 
and has Lipschitz gradient. We derive bounds for
both the local error, relating the error at step $k+1$ to
that at step $k$, and the global error, which controls the
error at step $k$ in terms of the initial error, the number
of steps, the step size, the strong convexity parameter,
and the smoothness parameters.

\begin{theorem}\label{th:1}
    Let $f:\mathbb R^p\to\mathbb R$ be $m$-strongly convex 
    and have $\Mmax$-Lipschitz gradient, with $0<m\leqslant 
    \Mmax$. Let $\bvartheta$ and $\bxi\sim \mathcal N(0,\mathbf 
    I_p)$ be two independent random vectors in $\mathbb R^p$. 
    If we denote by $\nu$ and $\nu^+$ the distributions of 
    $\bvartheta$ and $\bvartheta^+ =\bvartheta - h\nabla f 
    (\bvartheta) + \sqrt{2h}\,\bxi$, respectively, then for 
    every $h\geqslant 0$, we have
    \begin{align}
        \wass_2^2(\nu^+, \pi)  &\leqslant 
        \frac{1-mh}{1+mh}\,\wass_2^2(\nu, \pi) 
        + \frac{(\Mav+m)h^2p}{1+mh} + \frac{h^2(h\Mmax-1)}
        {1+mh}\|\nabla f(\bvartheta)\|_{
        \mathbb L_2}^2.\label{claim1}
    \end{align}
    The Langevin Monte Carlo algorithm with constant step-size
    $0\leqslant h\leqslant 1/\Mmax$ satisfies
    \begin{align}\label{claim3}
        \wass_2^2(\nu^{\sf LMC}_{k}, \pi) 
        \leqslant e^{-2mkh}\wass_2^2(\nu^{\sf LMC}_0, \pi) +
        \frac{(\Mav+m)hp}{2m}.
    \end{align}
\end{theorem}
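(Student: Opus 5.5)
The plan is to prove \eqref{claim1} by a one-step synchronous coupling, and then obtain \eqref{claim3} by iterating it. Let $(\bL_t)_{t\geqslant0}$ be the Langevin diffusion $d\bL_t=-\nabla f(\bL_t)\,dt+\sqrt2\,d\bW_t$ with $\bL_0\sim\pi$, and couple $\bL_0$ optimally with $\bvartheta$, so that $\mathbf E\|\bvartheta-\bL_0\|_2^2=\wass_2^2(\nu,\pi)$. Take $(\bW_t)$ independent of $(\bvartheta,\bL_0)$ and set $\bxi=\bW_h/\sqrt h$. Stationarity gives $\bL_h\sim\pi$, so $\wass_2^2(\nu^+,\pi)\leqslant\mathbf E\|\bvartheta^+-\bL_h\|_2^2$. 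The Gaussian increments cancel and
\begin{align}
\bvartheta^+-\bL_h=(\bvartheta-\bL_0)-h\nabla f(\bvartheta)+\int_0^h\nabla f(\bL_s)\,ds .
\end{align}

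The factor $(1-mh)/(1+mh)$ suggests working with an implicit, endpoint form rather than expanding around $\bL_0$. I would write $\bvartheta^+-\bL_h=(\bvartheta-\bL_0)-h\big(\nabla f(\bvartheta)-\nabla f(\bL_h)\big)+\bs R$, where
\begin{align}
\bs R=\int_0^h\big(\nabla f(\bL_s)-\nabla f(\bL_h)\big)\,ds .
\end{align}
The steps are then as follows.

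First, use strong convexity in the form $\langle\nabla f(\bvartheta^+)-\nabla f(\bL_h),\bvartheta^+-\bL_h\rangle\geqslant m\|\bvartheta^+-\bL_h\|_2^2$. Combined with \eqref{Lip:1}, applied between $\bvartheta$ and $\bvartheta^+$ and between $\bL_0$ and $\bL_h$ (a three-point, proximal-type identity), this should produce $(1+mh)\|\bvartheta^+-\bL_h\|^2$ on the left and $(1-mh)\|\bvartheta-\bL_0\|^2$ on the right.

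The function-value differences that appear are then handled exactly as in the analysis of one gradient step. The deterministic step $\bvartheta\mapsto\bvartheta-h\nabla f(\bvartheta)$ contributes $h^2(h\Mmax-1)\|\nabla f(\bvartheta)\|^2$ through the upper bound in \eqref{Lip:1}. This is where the last term of \eqref{claim1} comes from.

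The noise contributes terms of the form
\begin{align}
\mathbf E\big[f(\bs x+\sqrt{2h}\,\bxi)-f(\bs x)-\sqrt{2h}\,\nabla f(\bs x)^\top\bxi\big]
\end{align}
with $\bxi$ independent of $\bs x$. The key observation is that such a term is bounded by $h\sum_j M_j=h\,p\Mav$, not by $hp\Mmax$. To see this, write $\bxi=\sum_j\xi_j\bse_j$ and move one coordinate at a time along a telescoping path. Each one-dimensional increment costs at most $M_j\,2h\xi_j^2/2$ by the definition \eqref{coordM}. The linear terms left behind have zero mean, because $\xi_j$ is symmetric and independent of the previous coordinates; I expect to need a symmetrization ($\bxi\to-\bxi$) trick to make them cancel exactly.

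The remaining pieces, namely $\bs R$ and the martingale and Itô terms in $\mathbf E f(\bL_h)-\mathbf E f(\bL_0)$, should either vanish by stationarity or be absorbed into $mh^2p$. The stationarity facts I would use are $\mathbf E_\pi\|\nabla f\|^2=\mathbf E_\pi\Delta f$ and the corresponding identity with the generator. This gives the $(\Mav+m)h^2p$ term.

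The main obstacle is the bookkeeping in the previous paragraph. All second-order noise contributions must be routed through coordinate-wise expansions; any use of $\|\nabla^2 f\|_{\rm op}$ would reintroduce $\Mmax$ in place of $\Mav$. At the same time, the cross terms between $\bvartheta-\bL_0$ and the Brownian increment must be shown to vanish exactly, using independence of $\bW$ from $(\bvartheta,\bL_0)$, rather than being bounded by Cauchy--Schwarz. If the endpoint form proves awkward, I would instead write $\bs R$ via Itô's formula and use $\mathbf E\|\nabla f(\bL_s)-\nabla f(\bL_0)\|^2$ together with stationarity.

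Finally, \eqref{claim3} follows from \eqref{claim1}. For $h\leqslant1/\Mmax$ the last term of \eqref{claim1} is nonpositive, and $(1-mh)/(1+mh)\leqslant e^{-2mh}$. Iterating from step $0$ to step $k$ and summing the geometric series gives the additive term
\begin{align}
\frac{(\Mav+m)h^2p}{1+mh}\cdot\frac{1}{1-\frac{1-mh}{1+mh}}=\frac{(\Mav+m)hp}{2m},
\end{align}
which is exactly the constant in \eqref{claim3}.
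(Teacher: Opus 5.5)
There is a genuine gap in the one-step comparison, namely in where the factor $(1+mh)$ and the term $mh^2p$ come from. Your coordinate-wise telescoping bound for the noise is the paper's Lemma~\ref{lem:exp-upper}, and your final iteration is correct. No symmetrization is needed in the telescoping: ordering the path so that the $j$th increment starts from a point depending only on $\xi_{j+1},\dots,\xi_p$ already centres each linear term.

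The monotonicity inequality you invoke involves $\nabla f(\bvartheta^+)$, which does not occur in the explicit LMC step. Using it forces the extra term $h\langle\bvartheta^+-\bL_h,\nabla f(\bvartheta^+)-\nabla f(\bvartheta)\rangle$, in which $\bvartheta^+-\bvartheta$ carries the full noise $\sqrt{2h}\,\bxi$. This term is not centred. Under the assumptions of Theorem~\ref{th:1}, its natural bound is of order $h^{3/2}\Mmax\sqrt p\,\|\bvartheta^+-\bL_h\|_{\mathbb L_2}$, which brings back the old $\kappamax\sqrt{hp}$ rate.

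The remainder $\bs R$ has the same defect:
\begin{itemize}
\item It does not vanish by stationarity.
\item Itô's formula for $\nabla f(\bL_t)$ needs third derivatives, which are not assumed.
\item A Cauchy--Schwarz treatment of its cross term, even granting $\|\bs R\|_{\mathbb L_2}\lesssim h^{3/2}\sqrt{\Mmax\Mav p}$, costs $O\big(h^{3/2}\sqrt{\Mmax\Mav p}\big)\,\wass_2(\nu,\pi)$ per step. This gives a squared stationary error of order $\kappamax\Mav hp/m$, a factor $\kappamax$ worse than \eqref{claim3}.
\end{itemize}
Your fallback is essentially the paper's proof of Theorem~\ref{th:3}, which needs \eqref{lip:2} precisely to make the drift part of this remainder $O(h^2)$. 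The phrase ``absorbed into $mh^2p$'' has no concrete content as written.

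The missing idea is the driftless interpolation. Put $\bX=\bvartheta-h\nabla f(\bvartheta)$ and $\bX_s=\bX+\sqrt2\,\bW_s$. Then $\bX_s-\bL_s=\bX-\bL_0+\int_0^s\nabla f(\bL_u)\,du$ is $C^1$ in $s$. Hence $\|\bX_h-\bL_h\|_2^2=\|\bX-\bL_0\|_2^2+2\int_0^h(\bX_s-\bL_s)^\top\nabla f(\bL_s)\,ds$ holds exactly, with no remainder and no Itô correction.

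From there the paper's argument runs as follows:
\begin{itemize}
\item Strong convexity in function-value form, at $\bL_s$ towards $\bX_s$, bounds the integrand by $2\big(f(\bX_s)-f(\bL_s)\big)-m\|\bX_s-\bL_s\|_2^2$.
\item Orthogonality of Brownian increments gives $\mathbf E\|\bX_s-\bL_s\|_2^2=\mathbf E\|\bX_h-\bL_s\|_2^2-2(h-s)p\geqslant\wass_2^2(\nu^+,\pi)-2(h-s)p$. This produces both $(1+mh)$ and $mh^2p$ (Lemmas~\ref{lem:1c} and~\ref{lem:2}).
\item The factor $(1-mh)$ comes from strong convexity between $\bvartheta$ and $\bL_0$ (Lemma~\ref{lem:basic-gd}), not from \eqref{Lip:1} between $\bvartheta$ and $\bvartheta^+$.
\item The $f(\bL_s)$ terms cancel by stationarity.
\item What remains, $2\int_0^h\mathbf E[f(\bX_s)-f(\bvartheta)]\,ds$, is split into the descent inequality and your coordinate-wise bound at noise variance $2s$.
\end{itemize}

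This time average is also what yields the constant $\Mav+m$. Evaluating the noise at the endpoint, with variance $2h$ and prefactor $2h$ as your displayed term suggests, naturally gives $2\Mav h^2p$ instead. That is weaker, since $m\leqslant\Mav$; compare the remark after Lemma~\ref{lem:2}.
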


The bound in \eqref{claim3} consists of two contributions:
a geometrically decaying term, which reflects the influence of
the initial distribution, and a discretization term. In squared
Wasserstein distance, the latter is of order $(\kappav+1)hp$;
equivalently, in Wasserstein distance it is of order
$\sqrt{\kappav hp}$. The first nonasymptotic Wasserstein 
bounds for the LMC algorithm 
\cite{DalalyanColt,dalalyan2019user,durmus2019}
yielded a larger discretization error, of order $\kappamax
\sqrt{hp}$. The proofs in these works are probabilistic 
and based on the synchronous coupling: one considers a
stationary Langevin diffusion  $(\bL_t)_{t\geqslant0}$, 
see \eqref{LD},  driven by a Brownian motion $(\bW_t)_{ 
t\geqslant0}$ coupled with the LMC innovations through
\begin{align}
    \sqrt{2}\big(\bW_{H_{k+1}}-\bW_{H_k}\big)
    = \sqrt{2h_k}\,\bxi_k,
    \qquad
    H_k = h_0+\ldots+h_{k-1}.    
\end{align}
The goal is then to show that $\bvartheta_k^{\textsf{LMC}}$
is close to $\bL_{H_k}$ when $k$ is large.

The discretization error was later improved to
$\sqrt{\kappamax hp}$ in \cite{durmus2019analysis}, by 
means of more advanced analytical techniques on the space 
of probability measures. While mathematically elegant, 
that approach has two drawbacks for our purposes. First, 
it does not identify a coupling under which the bound is 
achieved. Second, its complex analytical nature makes it 
difficult to adapt to other variants of the Langevin
algorithm.

In this work, we give a short probabilistic proof of
\Cref{th:1}. The proof shows that the same improved type 
of bound is obtained under the synchronous coupling. 
Its simplicity also reveals that the relevant condition 
number can be sharpened from $\kappamax$ to $\kappav$. 

\subsection{Mixing time and comparison with prior work}
\label{ssec:2.2}

Let $\varepsilon\in(0,1]$ be a prescribed precision
level, and let $\kappav = \Mav/m$ and
$\kappamax = \Mmax/m$ denote the average and maximal
condition numbers, respectively. If we require the
squared Wasserstein-2 error of the LMC algorithm to be
at most $\varepsilon^2 p/m$, and assume that the initial
error is bounded by $\varepsilon_0^2 p/m$ for some
$\varepsilon_0>0$, then it is sufficient that the number
of iterations $k$ satisfy
$mhk\geqslant \frac{1}{2}\log 
\frac{2\varepsilon_0^2} {(2\varepsilon^2 - 
(\kappav+1)mh)_+}$. This inequality and the 
constraint $h\Mmax\leqslant 1$ required in \Cref{th:1} 
are both satisfied by choosing
\begin{align}\label{eq:h}
    mh = \frac{5\varepsilon^2}{3(\kappav + 1)}
    \bigwedge \frac{1}{\kappamax}\qquad
    \text{and}\qquad
    k \geqslant \Big\{\frac{0.6(\kappav +1)}{
    \varepsilon^2} \bigvee {\kappamax}
    \Big\}\log \Bigl(\frac{\sqrt{6}\,\varepsilon_0}{
    \varepsilon}\Bigr).
\end{align}
Indeed, the choice \eqref{eq:h} ensures that
$h\Mmax\leqslant 1$ and $2\varepsilon^2 - (\kappav+1)
mh\geqslant\frac{\varepsilon^2}{3}$.

A closely related benchmark is Theorem~9 in 
\cite{durmus2019analysis}; see also Eq.~(22) therein. 
It states that
\begin{align}\label{DMM1}
    \wass_2^2(\nu^{\sf LMC}_{k}, \pi) 
    \leqslant e^{-mkh}\wass_2^2(\nu^{\sf LMC}_0, \pi)
    + \frac{2\Mmax hp}{m}.
\end{align}
The right-hand side of \eqref{DMM1} is at most 
$\varepsilon^2p/m$ if and only if $mhk\geqslant 
\log\big(\frac{\varepsilon_0^2} {(\varepsilon^2 - 
2\kappamax mh)_+}\big)$. Choosing the step size by 
the same argument as above gives $mh = \frac{5}{12}
\frac{\varepsilon^2}{\kappamax}$. Therefore, the 
constraint on $k$ becomes
\begin{align}
    k\geqslant \frac{4.8\kappamax}{\varepsilon^2}
    \log\Big(\frac{\sqrt{6}\,\varepsilon_0}{
    \varepsilon}\Big).
\end{align}
Thus, in the regime where the discretization term 
dominates, our result improves the leading constant 
by nearly a factor of eight when $\kappav=\kappamax$. 
We do not regard this constant-factor improvement 
as a main contribution; rather, it illustrates that 
the shorter proof does not come at the cost of 
degraded constants. More importantly, the bound also
replaces the maximal condition number $\kappamax$ 
by the average condition number $\kappav$ in the 
discretization term, which can lead to a significant 
improvement in ill-conditioned high-dimensional 
problems.
\begin{table}[t]
    \centering
    \begin{tabular}{cc}
    \toprule
    Our bound & Benchmark from \cite{durmus2019analysis}
    \\
    \midrule
    $\displaystyle\Big\{\frac{0.6(\kappav +1)}{
    \varepsilon^2} \bigvee {\kappamax}\Big\}\,\log 
    \Bigl(\frac{\sqrt{6}\,\varepsilon_0}{\varepsilon}
    \Bigr)$ 
    &
    $\displaystyle\frac{4.8\kappamax}{\varepsilon^2}
    \log\Big(\frac{\sqrt{6}\,\varepsilon_0}{
    \varepsilon}\Big)$
    \\
    \bottomrule
    \end{tabular}
    \caption{Comparison of the number of LMC iterations
    sufficient to guarantee a $\wass_2$-error at most
    $\varepsilon\sqrt{p/m}$, under the condition that
    $f$ satisfies strong convexity and smoothness 
    condition \eqref{Lip:1}.}
    \label{tab:lmc-mixing-comparison}
\end{table}
We close this section with a comment on the quantity
$\varepsilon_0$. Since it depends on the target
distribution $\pi$, it is not readily available in most
practical applications. However, as noted in 
\citep[Remark 2.1]{dalalyan2019user}, whenever $f$ is
nonnegative, the unknown quantity $\varepsilon_0^2$ can
be replaced by the simple upper bound $(2/p)\mathbf E 
[f(\bvartheta_0)] + 1$. 

\subsection{Bounding the error of variable step-size LMC}

The improved bound of \Cref{th:1} can also be used to
analyze the LMC algorithm with varying step sizes. The
main point is that the discretization term now depends
on the average smoothness constant $\Mav$, while the
condition ensuring that the last term in \eqref{claim1}
is non-positive remains $h_k\Mmax\leqslant 1$. As a
result, the varying step-size schedule must still be
chosen so as to respect the maximal smoothness
$\Mmax$, but the resulting error bound involves the
improved condition number $\kappav=\Mav/m$. The
proposition below makes this precise and shows that 
varying step sizes remove the logarithmic dependence 
on $1/\varepsilon$ from the mixing time.

\begin{proposition}\label{prop:VSS}
Assume that the initial distribution satisfies
$\wass_2^2(\nu_0,\pi)\leqslant \varepsilon_0^2\,
{p}/{m}$ and set $k_0:= \big\lceil
\frac{\kappamax}{2} \log_+\big( \frac{2\kappamax
\varepsilon_0^2}{\kappav+1}\big)\big\rceil$. 
Assume that the step sizes $(h_k)_{k\in\mathbb N}$ 
are chosen according to  $h_k=1/({\Mmax+m(k-k_0 
)_+})$, $k\in\mathbb N$. Then, for every $k 
\geqslant k_0$, we have
\begin{align}\label{W2VS}
    \wass_2^2(\nu_k^{\sf LMC},\pi)
    \leqslant
    \frac{\kappav+1}{\kappamax+k-k_0}\cdot\frac{p}{m}.
\end{align}
Consequently, if one requires the squared Wasserstein-2
error to be smaller than $\varepsilon^2 p/m$, it is
sufficient to choose the number of iterations $k$ so that
\begin{align}\label{kVS}
    k\geqslant k_\varepsilon:= k_0+ \Big\lceil
    \Big(\frac{\kappav+1}{\varepsilon^2}-\kappamax
    \Big)_+ \Big\rceil.
\end{align}
\end{proposition}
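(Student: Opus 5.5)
The plan has two phases: a burn-in phase with constant step $h=1/\Mmax$, handled by \eqref{claim3}, followed by an induction on $k\geqslant k_0$ driven by the one-step inequality \eqref{claim1}. Throughout, write $c=\kappav+1$ and $\kappa=\kappamax$. The target bound \eqref{W2VS} is $a_j:=\frac{c}{\kappa+j}\cdot\frac{p}{m}$ with $j=k-k_0$.

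\emph{Burn-in phase.} For $k\leqslant k_0$ we have $h_k=1/\Mmax$, which is admissible in \Cref{th:1}. Applying \eqref{claim3} with $mh=1/\kappa$ gives
\begin{align}
\wass_2^2(\nu_{k_0}^{\sf LMC},\pi)\leqslant e^{-2k_0/\kappa}\,\varepsilon_0^2\frac{p}{m}+\frac{c}{2\kappa}\cdot\frac pm .
\end{align}
The definition of $k_0$ ensures $e^{-2k_0/\kappa}\leqslant \frac{c}{2\kappa\varepsilon_0^2}$. When the $\log_+$ vanishes, $k_0=0$ and this inequality instead follows directly from $2\kappa\varepsilon_0^2\leqslant c$. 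Hence the first term is at most $\frac{c}{2\kappa}\frac pm$, and the total is at most $\frac{c}{\kappa}\frac pm=a_0$. This is the base case of the induction.

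\emph{Induction step.} For $k\geqslant k_0$ we have $h_k=\frac{1}{m(\kappa+j)}\leqslant 1/\Mmax$, so the last term in \eqref{claim1} is non-positive and can be dropped. Set $n=\kappa+j$, so that $mh_k=1/n$. Since $(\Mav+m)h_k^2p=\frac{c}{n^2}\frac pm$, the hypothesis $\wass_2^2(\nu_k,\pi)\leqslant a_j$ combined with \eqref{claim1} yields
\begin{align}
\wass_2^2(\nu_{k+1}^{\sf LMC},\pi)\leqslant \frac{n-1}{n+1}\cdot\frac{c}{n}\frac pm+\frac{c}{n(n+1)}\frac pm=\frac{c}{n+1}\frac pm=a_{j+1}.
\end{align}
This closes the induction and proves \eqref{W2VS}. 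The algebra telescopes exactly; this is why the step-size schedule $1/(\Mmax+m(k-k_0)_+)$ is the natural choice.

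\emph{Iteration count.} Finally, $\frac{c}{\kappa+k-k_0}\leqslant\varepsilon^2$ holds as soon as $k-k_0\geqslant c/\varepsilon^2-\kappa$. Taking the positive part and the ceiling gives \eqref{kVS}. I expect the only delicate point to be the burn-in calibration, namely checking that the exact constant in $k_0$, including the $\log_+$ degenerate case, produces precisely $a_0$. The induction itself is a direct computation.
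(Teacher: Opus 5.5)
Your proposal is correct and follows essentially the same route as the paper. It uses a burn-in phase with $h_k=1/\Mmax$, controlled by \eqref{claim3}, to get the base case $\frac{\kappav+1}{\kappamax}\cdot\frac pm$ at $k=k_0$, followed by the same exactly telescoping induction from \eqref{claim1} with the non-positive last term dropped; your explicit check of the degenerate case where the $\log_+$ vanishes is a small detail the paper leaves implicit.
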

Variable or decreasing step sizes have already been used
in the analysis of LMC and ULA; see, for instance,
\citep{durmus2017}. Closest to the present
discussion is \citep[Theorem 2]{dalalyan2019user}, which
shows that choosing the step sizes $h_k$ of order $1/k$
removes the factor $\log(1/\varepsilon)$ from the mixing
time. Considering the initial error $\varepsilon_0$ as a
constant, the result therein gives a mixing time of order
$\kappamax^2/\varepsilon^2$, whereas \Cref{prop:VSS}
leads to the improved rate $\kappamax\log(\kappamax/
\kappav) + \kappav/\varepsilon^2$.

\section{A probabilistic proof of \Cref{th:1}}
\label{Sec:proof}

The argument relies on three auxiliary lemmas, stated when 
they are first used. Their proofs are deferred to 
\Cref{Sec:proofs}.
\begin{lemma}[Gaussian perturbation lemma]\label{lem:2}
    Let $f$ be $m$-strongly convex for some $m\geqslant 
    0$. Let $\bX$ and $\bY$ be two $p$-dimensional random 
    vectors such that $\bX$ is square-integrable and 
    $\bY$ is drawn from $\pi\propto \exp(-f)$. For any
    $p$-dimensional Brownian motion $(\bW_t)_{t\ge 0}$
    independent of\/ $\bY$, we have 
    \begin{align}
        (1+mh)\wass_2^2(\bX+\sqrt{2}\,\bW_h, \bY) &\leqslant
        \|\bX - \bY\|^2_{\mathbb L_2} + 2h
        \mathbf{E} [f(\bX+\sqrt{2}\,\bW_h) - f(\bY)] 
        \label{cor:1.0}.
    \end{align} 
    If, in addition,  $\mathbf E[\bW_t^\top \bX] = 0$ for 
    every $t\in [0,h]$,  and we set $\bX_t = \bX+\sqrt{2}\,
    \bW_t$, then 
    \begin{align}
        (1+mh)\wass_2^2(\bX_h, \bY) &\leqslant
        \|\bX - \bY\|^2_{\mathbb L_2} + 2\int_0^h 
        \mathbf{E} [f(\bX_s) - f(\bY)]\,ds + mh^2p\label{cor:1.1}.
    \end{align} 
\end{lemma}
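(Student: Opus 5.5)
The plan is a synchronous coupling with a stationary Langevin diffusion, followed by a Gronwall-type integration. Let $(\bL_t)_{t\geqslant0}$ solve $d\bL_t=-\nabla f(\bL_t)\,dt+\sqrt2\,d\bW_t$ with $\bL_0=\bY$, driven by the same Brownian motion as $\bX_t=\bX+\sqrt2\,\bW_t$. Since $\bW$ is independent of $\bY\sim\pi$, the process is stationary: $\bL_t\sim\pi$ for all $t$, so $\mathbf E f(\bL_t)=\mathbf E f(\bY)$ and $\wass_2^2(\bX_h,\bY)\leqslant D_h:=\mathbf E\|\bX_h-\bL_h\|_2^2$. The noise cancels in the difference, so $t\mapsto \bX_t-\bL_t$ is $C^1$ and
\begin{align}
\frac{d}{dt}\|\bX_t-\bL_t\|_2^2=2(\bX_t-\bL_t)^\top\nabla f(\bL_t)\leqslant 2\big(f(\bX_t)-f(\bL_t)\big)-m\|\bX_t-\bL_t\|_2^2 ,
\end{align}
by $m$-strong convexity \eqref{Lip:1}. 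Taking expectations and writing $g(t)=\mathbf E[f(\bX_t)-f(\bY)]$, I get $D_h\leqslant D_0+2\int_0^h g(s)\,ds-m\int_0^h D_s\,ds$. Square-integrability of $\bX$ and the linear growth of $\nabla f$ justify these manipulations; a localization handles the case $\mathbf E f(\bX_t)=\infty$, where the claims are trivial.

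The main issue is to turn $-m\int_0^h D_s\,ds$ into the factor $(1+mh)$ multiplying $D_h$.

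For \eqref{cor:1.1}, I would apply strong convexity with the first argument frozen at the starting point $\bX$, rather than $\bX_s$. More precisely, I would compare $D_s$ with $\mathbf E\|\bX-\bL_s\|_2^2$, or with a quantity of the form $D_h$ minus a drift correction. Expanding $\|\bX_s-\bL_s\|_2^2=\|\bX-\bL_s\|_2^2+2\sqrt2\,\bW_s^\top(\bX-\bL_s)+2\|\bW_s\|_2^2$ and using $\mathbf E[\bW_s^\top\bX]=0$, the discrepancy between the two versions is $\mathbf E\|\sqrt2\bW_s\|_2^2=2sp$ up to cross terms, which I expect to control in sign. Integrating $m\cdot 2sp$ over $[0,h]$ gives exactly the extra term $mh^2p$ in \eqref{cor:1.1}. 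Concretely, I aim for the lower bound $m\int_0^h D_s\,ds\geqslant mh\,D_h-mh^2p-(\text{terms absorbed by }g)$. To get it, I would write $D_h-D_s=2\int_s^h\mathbf E[(\bX_u-\bL_u)^\top\nabla f(\bL_u)]\,du$ and regroup the resulting double integral via Fubini. This is the step I expect to be the main obstacle, since $g$ need not be nonnegative. Tracking exactly which weights multiply $g$ after Fubini, and checking that they collapse to the unweighted integral $2\int_0^h g$, is where the constants must work out.

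For \eqref{cor:1.0}, I would use that $t\mapsto\mathbf E f(\bX_t)$ is nondecreasing. Indeed, $\bX_h=\bX_t+\sqrt2(\bW_h-\bW_t)$ with a centered increment independent of $\bX_t$, so conditional Jensen for convex $f$ gives $\mathbf E f(\bX_h)\geqslant\mathbf E f(\bX_t)$. Hence $\int_0^h g\leqslant h\,g(h)$. Here no orthogonality is assumed, so I would not pass through $\bX$. Instead I would run the same argument with the roles reversed: apply strong convexity along the segment from $\bL_u$ to the \emph{endpoint} $\bX_h$, or equivalently couple $\bX_h$ backward. Then the $m$-term directly yields $mh\,D_h$ without the $mh^2p$ correction, at the price of evaluating $f$ at $\bX_h$, which is exactly what \eqref{cor:1.0} allows. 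If this reversed coupling fails to close, the fallback is to derive \eqref{cor:1.0} from the EVI for the Langevin semigroup. Along the heat flow started at $\mathcal L(\bX)$, the entropy part of $\mathrm{KL}(\cdot\,|\,\pi)$ is handled by the EVI of the heat flow, and the potential part by $\mathbf E f(\bX_h)$, using the monotonicity just established.
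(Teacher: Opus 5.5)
Your setup matches the paper's: synchronous coupling with the stationary diffusion $\bL$, the $C^1$ identity for $\|\bX_t-\bL_t\|_2^2$, and strong convexity between $\bX_t$ and $\bL_t$. It correctly gives $D_h+m\int_0^h D_s\,ds\le D_0+2\int_0^h g(s)\,ds$. The gap is in the next step. Both of your routes aim at an inequality for the synchronous cost itself, $(1+mh)D_h\le\dots$. That inequality is false, so neither the Fubini bookkeeping nor the reversed strong convexity can deliver it.

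Take $p=1$, $f(\theta)=\theta^2/2$ (so $m=1$ and $\pi=\mathcal N(0,1)$), and $\bX=\lambda\bY$ with $\bY$ independent of $\bW$, so that $\mathbf E[\bW_t^\top\bX]=0$. Then $\bX_t-\bL_t=(\lambda-e^{-t})\bY+\sqrt2\int_0^t(1-e^{-(t-v)})\,d\bW_v$. Hence $D_h=(\lambda-1)^2+2(\lambda-1)h+(2-\lambda)h^2+O(h^3)$, while $g(s)=(\lambda^2-1)/2+s$. Consequently
\[
(1+h)D_h-\Big\{D_0+2\int_0^h g(s)\,ds+h^2\Big\}=(\lambda-2)h^2+O(h^3),
\]
which is positive for $\lambda>2$ and small $h$. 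Here $2hg(h)=2\int_0^h g(s)\,ds+h^2$, so the same failure occurs for the right-hand side of \eqref{cor:1.0}.

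Carried out, your Fubini regrouping only gives $(1+mh)D_h\le D_0+2\int_0^h(1+ms)g(s)\,ds$. The extra term $2m\int_0^h s\,g(s)\,ds$ cannot be traded for $mh^2p$, because $g$ has no sign and can be arbitrarily large. Likewise, strong convexity between $\bL_s$ and the endpoint $\bX_h$ does not ``directly yield $mh\,D_h$''. It yields $m\int_0^h\mathbf E\|\bX_h-\bL_s\|_2^2\,ds$, and in this example $\mathbf E\|\bX_h-\bL_0\|_2^2=D_0+2h<D_h$.

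The missing idea is that the $m$-term never needs $D_h$. For every $s\in[0,h]$, the pair $(\bX_h,\bL_s)$ is itself a coupling of $\mathcal L(\bX_h)$ with $\pi$, because $\bL_s\sim\pi$. Hence $\mathbf E\|\bX_h-\bL_s\|_2^2\ge\wass_2^2(\bX_h,\bY)$.

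For \eqref{cor:1.1}, write $\bX_h-\bL_s=(\bX_s-\bL_s)+\sqrt2(\bW_h-\bW_s)$. The increment is orthogonal to $\bX_s-\bL_s$: it is independent of $\bY$ and of $(\bW_u)_{u\le s}$, and $\mathbf E[\bW_t^\top\bX]=0$ at $t=s,h$. This is the only place the extra hypothesis is used. Therefore $D_s=\mathbf E\|\bX_h-\bL_s\|_2^2-2(h-s)p$, and
\[
m\int_0^h D_s\,ds\ \ge\ mh\,\wass_2^2(\bX_h,\bY)-mh^2p ,
\]
with no $g$-terms at all. Together with $D_h\ge\wass_2^2(\bX_h,\bY)$, this is \eqref{cor:1.1}.

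So the right ``freeze'' is at the endpoint $\bX_h$, not at the starting point $\bX$. The quantity $\mathbf E\|\bX-\bL_s\|_2^2$ only controls the distance of $\mathcal L(\bX)$ to $\pi$, and the cross term $\mathbf E[\bW_s^\top\bL_s]$ in your expansion does not vanish.

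For \eqref{cor:1.0}, your endpoint strong convexity is exactly the paper's route, once you do two things. First, bound $m\int_0^h\mathbf E\|\bX_h-\bL_s\|_2^2\,ds$ below by $mh\,\wass_2^2(\bX_h,\bY)$ as above. Second, note that the leftover term $\sqrt2\,\mathbf E[(\bW_h-\bW_s)^\top\nabla f(\bL_s)]$ vanishes, and that $\mathbf E f(\bL_s)=\mathbf E f(\bY)$ by stationarity. The monotonicity of $t\mapsto\mathbf E f(\bX_t)$ is then unnecessary. It is also not justified under the hypotheses of \eqref{cor:1.0}, which allow $\bX$ to depend on $\bW$.
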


Before proceeding with the proof, two clarifications are
in order. First, although the same notation $\bY$ is 
used on both sides of \eqref{cor:1.1}, the two 
occurrences refer to different objects in the proof. We
use the same notation only because they have the same
distribution. More precisely, the $\bY$ on the
right-hand side is optimally coupled with $\bX$,
whereas the $\bY$ on the left-hand side is the value at
time $h$ of a Langevin process started from $\bY$ and
driven by the Brownian motion $(\bW_t)_{t\geqslant 0}$.
Second, the upper bound in \eqref{cor:1.1} is sharper
than that in \eqref{cor:1.0}. This is not obvious a
priori, but it is what emerged from the proof of the
main theorem. For this reason, only \eqref{cor:1.1}
will be used in the proof of \Cref{th:1}. We
nevertheless chose to keep \eqref{cor:1.0} in the
statement of the lemma, since it holds under more
general assumptions and still provides a rather tight
bound. If one uses \eqref{cor:1.0} instead of
\eqref{cor:1.1} in the proof of \Cref{th:1}, the only
change is that the constant $\Mav + m$ is replaced by
$2\Mav$.

Let $\bY\sim \pi$ be optimally coupled with $\bvartheta$, 
so that $\mathbf E[\|\bvartheta - \bY\|_2^2] = \wass_2^2
(\nu,\pi)$. We apply \eqref{cor:1.1} to $\bX = \bvartheta 
- h\nabla f(\bvartheta)$ and a Brownian motion $\bW$ 
independent of $(\bvartheta,\bY)$. We get
\begin{align}
    (1+mh)\wass_2^2(\nu^+\!\!, \pi) 
    &= (1+mh)\wass_2^2(\bvartheta-h\nabla f(\bvartheta) + 
    \sqrt{2h}\,\bxi, \bY) \\
    &\leqslant \|\bX - \bY \|^2_{\mathbb L_2}
    +2\int_0^h\mathbf E[f(\bX_s) -f(\bY)]
    \,ds + mh^2p\\
    &\leqslant \|\bvartheta - h\nabla f(\bvartheta) - 
    \bY \|^2_{\mathbb L_2}\!
    +2\!\int_0^h\!\mathbf E[f(\bX_s) -f(\bY)]\,ds 
    + mh^2p.
    \label{eq:22}
\end{align}
We now state the second technical result providing
a tight upper bound on the first term in the last display. 

\begin{lemma}[Gradient-step lemma]\label{lem:basic-gd}
    If the function $f$ is $m$-strongly convex, then for
    every $\btheta,\by\in\mathbb R^p$ and every $h>0$, we have
    \begin{align}
        \|\btheta-h\nabla f(\btheta)-\by\|_2^2
        \leqslant
        (1-mh)\|\btheta-\by\|_2^2
        + 2h\big(f(\by)-f(\btheta)\big)
        + h^2\|\nabla f(\btheta)\|_2^2.
        \label{eq:basic-gd}
    \end{align}
\end{lemma}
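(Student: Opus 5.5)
The inequality follows from expanding the square and applying the strong convexity inequality \eqref{Lip:1} once. Nothing beyond the lower bound in \eqref{Lip:1} is needed: no smoothness, and no restriction on $h$ other than $h>0$.

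First expand the left-hand side of \eqref{eq:basic-gd} around $\btheta-\by$:
\begin{align}
    \|\btheta-h\nabla f(\btheta)-\by\|_2^2
    = \|\btheta-\by\|_2^2 - 2h\,\nabla f(\btheta)^\top(\btheta-\by) + h^2\|\nabla f(\btheta)\|_2^2 .
\end{align}
The last term already matches the right-hand side of \eqref{eq:basic-gd}. It therefore remains to bound the cross term $-2h\,\nabla f(\btheta)^\top(\btheta-\by)$.

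Next apply the lower bound in \eqref{Lip:1} with base point $\btheta$ and target point $\btheta'=\by$. This gives
\begin{align}
    f(\by) \geqslant f(\btheta) + \nabla f(\btheta)^\top(\by-\btheta) + \frac m2\|\by-\btheta\|_2^2 .
\end{align}
Rearranging,
\begin{align}
    -\nabla f(\btheta)^\top(\btheta-\by) \leqslant f(\by)-f(\btheta) - \frac m2\|\btheta-\by\|_2^2 .
\end{align}
Multiplying by $2h>0$ bounds the cross term by $2h\big(f(\by)-f(\btheta)\big) - mh\|\btheta-\by\|_2^2$. The quadratic piece combines with $\|\btheta-\by\|_2^2$ to give the factor $(1-mh)$, which yields \eqref{eq:basic-gd}.

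The only point needing care is the orientation in \eqref{Lip:1}. The gradient must be evaluated at $\btheta$, the point where the step is taken, and not at $\by$; this is what lets the function-value difference appear as $f(\by)-f(\btheta)$. If $f$ is not assumed differentiable, $\nabla f(\btheta)$ can be replaced by any subgradient and the argument is unchanged.
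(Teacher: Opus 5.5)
Your proposal is correct and matches the paper's proof step for step: expand the square, apply the strong-convexity lower bound with the gradient at $\btheta$ and target point $\by$, multiply by $2h$, and absorb the quadratic term into the factor $(1-mh)$. Your side remarks also hold: no smoothness or step-size restriction is needed, and any subgradient can replace $\nabla f(\btheta)$.
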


Since $\btheta$ and $\by$ in this lemma are arbitrary,
we can replace them by the random variables 
$\bvartheta$ and $\bY$ and take the expectation of 
both sides of the obtained inequality. In conjunction 
with the fact that $\bX = \bvartheta - h\nabla f 
(\bvartheta)$,  this leads to 
\begin{align}
    \|\bvartheta - h\nabla f(\bvartheta) - \bY 
    \|^2_{\mathbb L_2}
    &\leqslant (1-mh) \|\bvartheta - \bY\|_{\mathbb 
    L_2}^2 + 2h\mathbf E\big[f(\bY) - f(\bvartheta)
    \big]  + h^2\|\nabla f(\bvartheta)\|_{\mathbb
    L_2}^2.
\end{align}
Combining with \eqref{eq:22}, and using  that $\bY$ is 
optimally coupled with $\bvartheta$,  we arrive at
\begin{align}
    (1+mh)\wass_2^2(\nu^+, \pi) 
    \leqslant (1-mh)\wass_2^2(\nu,\pi) &+ 2\int_0^h
    \mathbf E[f(\bX_s) -f(\bvartheta)]\,ds\\
    & + h^2\|\nabla f(\bvartheta)\|_{\mathbb 
    L_2}^2 + mh^2p. 
\end{align}
Since $\bW$ is independent of $(\bvartheta,\bY)$, 
$\bX_s =\bX + \sqrt{2}\,\bW_s$ has the same distribution 
as $\bX + \sqrt{2s}\,\bxi$, for every $s\geqslant 0$, with
$\bxi\sim \mathcal N_p(0,\bfI_p)$ independent of
$\bvartheta$. Therefore, 
\begin{align}
    (1+mh)\wass_2^2(\nu^+, \pi) 
    \leqslant (1-mh)\wass_2^2(\nu,\pi) &+ 2\int_0^h
    \mathbf E[f(\bvartheta -h\nabla f(\bvartheta) + 
    \sqrt{2s}\,\bxi) -f(\bvartheta)]\,ds\\
    & + h^2\|\nabla f(\bvartheta)\|_{\mathbb 
    L_2}^2 + mh^2p. \label{eq:20}
\end{align}
The next lemma is where the improvement from $\Mmax$ 
to $\Mav$ enters. The key idea is to exploit the 
independent coordinate structure of the Gaussian 
perturbation one coordinate at a time.

\begin{lemma}[Smoothing lemma]\label{lem:exp-upper}
    Assume that the gradient of $f$ is 
    $\Mmax$-Lipschitz and that the constants $M_j$
    and\/ $\Mav$ are defined by \eqref{coordM}. Then,
    for every  $\btheta,\btheta'\in\mathbb
    R^p$, every $h,s\geqslant 0$, and a $p$-
    dimensional random vector $\bxi$ with
    independent coordinates, each having zero mean and 
    unit variance, we have
    \begin{align}
        \mathbf E\big[f(\btheta'
        +\sqrt{2s}\, \bxi) - f(\btheta')\big]
        &\leqslant
        \Mav ps,
        \label{eq:exp-upper:a}\\
        2\big[f(\btheta-h\nabla f (\btheta)) - 
        f(\btheta)\big] &\leqslant  - h (2- 
        h{\Mmax}) \|\nabla f(\btheta) \|_2^2.
        \label{eq:exp-upper}
    \end{align}
\end{lemma}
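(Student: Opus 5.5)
The second inequality \eqref{eq:exp-upper} is the standard descent lemma, so I will dispatch it first. Apply the upper bound in \eqref{Lip:1} with $\btheta' = \btheta - h\nabla f(\btheta)$. This gives
\begin{align}
    f(\btheta-h\nabla f(\btheta)) - f(\btheta)
    \leqslant -h\|\nabla f(\btheta)\|_2^2 + \frac{\Mmax h^2}{2}\|\nabla f(\btheta)\|_2^2 .
\end{align}
Multiplying by two yields $-h(2-h\Mmax)\|\nabla f(\btheta)\|_2^2$, as claimed. No differentiability beyond $\nabla f$ being Lipschitz is needed.

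For \eqref{eq:exp-upper:a}, I will telescope over coordinates. Set $a=\sqrt{2s}$ and $\bxi=(\xi_1,\dots,\xi_p)$. Define the partial sums $\bu_0=\btheta'$ and $\bu_j=\btheta'+a\sum_{i\leqslant j}\xi_i\bse_i$, so that $\bu_p=\btheta'+\sqrt{2s}\,\bxi$ and
\begin{align}
    f(\bu_p)-f(\bu_0)=\sum_{j=1}^p \big[f(\bu_{j-1}+a\xi_j\bse_j)-f(\bu_{j-1})\big].
\end{align}
By the definition of $M_j$ in \eqref{coordM}, applied at the point $\bu_{j-1}$ with increment $a\xi_j$ (the case $\xi_j=0$ being trivial), each summand is bounded by
\begin{align}
    a\xi_j\,\partial_j f(\bu_{j-1}) + \frac{M_j}{2}a^2\xi_j^2 .
\end{align}

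Next I take expectations term by term. Since $\bu_{j-1}$ depends only on $\xi_1,\dots,\xi_{j-1}$, and $\xi_j$ is independent of these with zero mean, we get $\mathbf E[\xi_j\partial_j f(\bu_{j-1})]=0$. The required integrability holds because $\nabla f$ has linear growth and the $\xi_i$ are square-integrable. The quadratic term contributes $\frac{M_j}{2}\cdot 2s\cdot\mathbf E[\xi_j^2]=M_js$. Summing over $j$ gives $\sum_j M_j s=\Mav ps$.

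The only delicate point is the conditioning step: the cross terms vanish precisely because the coordinates are perturbed one at a time and each new coordinate is independent of the base point $\bu_{j-1}$. This is where independence of the coordinates is used, and it is what allows the coordinate-wise constants $M_j$ to replace $\Mmax$.
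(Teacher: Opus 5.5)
Your proposal is correct and matches the paper's proof: the descent-lemma argument for \eqref{eq:exp-upper} is identical, and for \eqref{eq:exp-upper:a} the paper also telescopes one coordinate at a time, bounds each increment by the definition of $M_j$, and kills the cross terms using independence of $\xi_j$ from the base point. The only difference is cosmetic: the paper zeroes coordinates starting from the first, so its base point depends on $\xi_{j+1},\dots,\xi_p$, whereas you add coordinates in forward order; your integrability remark for the cross terms is a detail the paper leaves implicit.
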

We now combine \eqref{eq:exp-upper:a} and 
\eqref{eq:exp-upper} with \eqref{eq:20}. To this
end, we replace the vector $\btheta$ by the random
vector $\bvartheta$ and $\btheta'$ by $\bvartheta
- h\nabla f(\bvartheta)$. This substitution is
valid, thanks to the independence between $\bvartheta$ 
and $\bxi$. By dividing by $(1+mh)$, we get  
\begin{align}\label{eq:2:9}
    \wass_2^2(\nu^+, \pi) 
    &\leqslant \frac{1-mh}{1+mh}\,\wass_2^2(\nu, \pi) +
    \frac{(\Mav+m)h^2p}{1+mh} + \frac{h^2(h\Mmax -1)}{1+mh}\|\nabla f(\bvartheta)\|_{\mathbb L_2}^2.
\end{align}
This proves \eqref{claim1}, the first claim of the theorem. 

Note that when $\nu = \nu_{k}^{\sf LMC}$,  we have 
$\nu^+ = \nu_{k+1}^{\sf LMC}$. Unfolding this inequality, 
we arrive at
\begin{align}
    \wass_2^2(\nu^{\sf LMC}_{k}, \pi) 
    &\leqslant \Big(\frac{1-mh}{1+mh}\Big)^k
    \wass_2^2(\nu^{\sf LMC}_0, \pi) +
    \frac{(\Mav+m)hp}{2m} \\
    &\qquad\qquad+ \frac{h^2(h\Mmax -1)}{1+mh}\sum_{j=1}^k
    \Big(\frac{1-mh}{1+mh}\Big)^{j-1}\|\nabla 
    f(\bvartheta_{k-j}^{\sf LMC})\|_{\mathbb L_2}^2.
\end{align}
Using the inequality $(1-mh)\leqslant e^{-2mh}(1+mh)$ to
upper bound the first term of the right-hand side, and
the condition $h\Mmax\leqslant 1$ to upper bound the
last term by $0$, we get the inequality of claim 
\eqref{claim3} of the theorem.

\begin{remark}
    The probabilistic proof presented in this section has
    one limitation compared to the analytical approach of
    \cite{durmus2019analysis}. In addition to Wasserstein
    guarantees under strong convexity, the latter also
    yields, as a byproduct, upper bounds in KL-divergence
    for averaged LMC under mere convexity assumptions,
    without requiring strong convexity. We were not able
    to recover such results using the present 
    coupling-based argument.
\end{remark}

\section{Error bound when the Laplacian is 
Lipschitz-continuous}
\label{Sec:LLC}

An attractive feature of LMC, already observed in
\cite{alfonsi2015,durmus2019}, is that, when $f$ has a
Lipschitz-continuous Hessian, the dependence of the
Wasserstein-2 discretization error on the step size can
be improved from $\sqrt h$ to $h$. More precisely,
Theorem~8 in \cite{durmus2019}, see also Theorem~5 in
\cite{dalalyan2019user}, shows that if the Hessian of
$f$ is $M_2$-Lipschitz continuous, then the 
discretization error is of order
\begin{align}\label{eq:prior1}
    \big(\Mmax^{3/2} + M_2\sqrt p\big)\,\frac{h\sqrt 
    p}{m} .
\end{align}
Although this bound is quite appealing, it raises two
natural questions in light of the results of the previous
section. The first is whether the dependence on $\Mmax$
can be improved and replaced, at least partly, by the
average smoothness constant $\Mav$. The second concerns
the term $M_2\sqrt p$. This term is suboptimal for
potentials that can be written as sums of univariate
functions. Indeed, in such cases, one expects the squared
Wasserstein error to scale linearly with the dimension
$p$, whereas the presence of the factor $M_2\sqrt p$ may
lead to an upper bound of order $p^2$ for the squared
Wasserstein error. The goal of this section is to obtain
a refined version of \cite[Theorem~8]{durmus2019} and
\cite[Theorem~5]{dalalyan2019user} that addresses these
two points.

We now assume that the Laplacian of $f$ is
Lipschitz-continuous. In contrast to most previous
results, we measure third-order smoothness through a
weaker, trace-type quantity rather than through the
operator-norm Lipschitz seminorm of the Hessian. More
precisely, we assume that $f$ is three times continuously
differentiable and that
\begin{align}\label{lip:2}
    \sup_{\btheta\in\mathbb R^p} \|\nabla \Delta 
    f(\btheta)\|_2^2 \leqslant M^2_{\Delta}p.
\end{align}
This assumption is satisfied, in particular, when
$f\in C^3$ is a sum of coordinate-wise univariate
functions and its third-order partial derivatives are
uniformly bounded by $M_\Delta$. Let $M_2$ and
$M_{\mathsf F}$ denote the Lipschitz seminorms of the
Hessian when the matrix norm used in the Lipschitz
condition is, respectively, the operator norm and the
Frobenius norm. If $M_\Delta$ denotes the smallest
constant satisfying \eqref{lip:2}, then
\begin{align}
    M_\Delta\leqslant M_{\mathsf F}
    \leqslant M_2\sqrt p,
    \qquad
    M_2\leqslant M_{\mathsf F}.
\end{align}
In general, there is no ordering between $M_2$ and
$M_\Delta$, since $M_\Delta$ only controls a
trace-contraction of the third derivative tensor. As we
show in \Cref{Sec:GLM}, in high-dimensional settings,
$M_\Delta$ may be much smaller than $M_2\sqrt p$. The
next theorem shows that the previously obtained bounds
can be sharpened by replacing the terms $\Mmax^{3/2}$
and $M_2\sqrt p$ by $\Mmax\sqrt{\Mav}$ and $M_\Delta$,
respectively.

\begin{theorem}\label{th:3}
    Let $f:\mathbb R^p\to\mathbb R$ be $m$-strongly convex 
    and have $\Mmax$-Lipschitz gradient, with $0<m\leqslant 
    \Mmax$. Assume, in addition, that $f$ is three times 
    continuously differentiable on $\mathbb R^p$ and satisfies 
    \eqref{lip:2} with some constant $M_\Delta$. Then the 
    Langevin Monte Carlo algorithm with constant step-size
    $h\in (0,1/\Mmax)$ satisfies, for every $k\in\mathbb N$,
    \begin{align}\label{claim3c}
        \wass_2(\nu^{\sf LMC}_{k}, \pi) 
        \leqslant (1-mh)^k \wass_2(\nu^{\sf LMC}_0,\pi) + 
        \big(2\Mmax \sqrt{\Mav}  + M_\Delta\big)\,
        \frac{h\sqrt{p}}{m}.
    \end{align}
\end{theorem}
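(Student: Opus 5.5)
We plan to use a synchronous coupling in $\wass_2$ (not squared). The recursion should take the form $\wass_2(\nu_{k+1},\pi)\leqslant(1-mh)\wass_2(\nu_k,\pi)+C h^2\sqrt p$, with $C=2\Mmax\sqrt{\Mav}+M_\Delta$. Unrolling then gives \eqref{claim3c}, because $\sum_j(1-mh)^j\leqslant 1/(mh)$.

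\textbf{Coupling and decomposition.} Let $\bL_0\sim\pi$ be optimally coupled with $\bvartheta_k$, and let $\bL_t$ be the Langevin diffusion driven by $\bW$ with $\sqrt2(\bW_h-\bW_0)=\sqrt{2h}\,\bxi_k$. Writing
\[
\bL_h=\bL_0-h\nabla f(\bL_0)+\sqrt{2h}\,\bxi_k-\int_0^h\big(\nabla f(\bL_s)-\nabla f(\bL_0)\big)\,ds ,
\]
we get
\[
\bvartheta_{k+1}-\bL_h=\big[\bvartheta_k-h\nabla f(\bvartheta_k)-\bL_0+h\nabla f(\bL_0)\big]+\bs\zeta,
\qquad
\bs\zeta=\int_0^h\big(\nabla f(\bL_s)-\nabla f(\bL_0)\big)\,ds .
\]
For $h\leqslant 1/\Mmax$ the bracket is a contraction, with norm at most $(1-mh)\|\bvartheta_k-\bL_0\|_2$.

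\textbf{Splitting $\bs\zeta$.} By Itô's formula,
\[
\nabla f(\bL_s)-\nabla f(\bL_0)=\int_0^s\big(-\nabla^2 f\,\nabla f+\nabla\Delta f\big)(\bL_u)\,du+\sqrt2\int_0^s\nabla^2 f(\bL_u)\,d\bW_u .
\]
This splits $\bs\zeta=\bs\zeta^{\rm drift}+\bs\zeta^{\rm mart}$. The martingale part has mean zero conditionally on $\mathcal F_0$, hence it is orthogonal to the $\mathcal F_0$-measurable contraction term. Using the standard Pythagoras-type inequality of \cite{dalalyan2019user}, with $\bs a$ the contraction term, we get
\[
\|\bs a+\bs\zeta\|_{\mathbb L_2}^2\leqslant\big(\|\bs a\|_{\mathbb L_2}+\|\bs\zeta^{\rm drift}\|_{\mathbb L_2}\big)^2+\|\bs\zeta^{\rm mart}\|_{\mathbb L_2}^2 .
\]
The main obstacle is that the martingale term is of order $h^{3/2}$. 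Squared and summed over $1/(mh)$ steps, it contributes only order $h^2p/m$ to $\wass_2^2$, but only if it stays squared. We therefore work with $x_k=\wass_2$ and use
\[
x_{k+1}^2\leqslant\big((1-mh)x_k+D\big)^2+V ,
\]
which yields $x_k\lesssim(1-mh)^kx_0+D/(mh)+\sqrt{V/(mh)}$.

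\textbf{Bounding the terms.} We bound each piece using stationarity of $\bL_u\sim\pi$.

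1. Drift, Hessian part: $\|\nabla^2 f\,\nabla f(\bL_u)\|_{\mathbb L_2}\leqslant\Mmax\|\nabla f\|_{\mathbb L_2(\pi)}$, and $\mathbf E_\pi\|\nabla f\|_2^2=\mathbf E_\pi\Delta f\leqslant p\Mav$ by integration by parts. This gives the contribution $\Mmax\sqrt{\Mav p}\,h^2/2$.

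2. Drift, Laplacian part: by \eqref{lip:2}, $\|\nabla\Delta f\|_2\leqslant M_\Delta\sqrt p$, which gives $M_\Delta\sqrt p\,h^2/2$.

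3. Martingale part: $\mathbf E\|\int_0^h(h-u)\sqrt2\,\nabla^2 f(\bL_u)\,d\bW_u\|_2^2=2\int_0^h(h-u)^2\,\mathbf E\|\nabla^2 f(\bL_u)\|_{\mathsf F}^2\,du$. Since $\|\nabla^2 f\|_{\mathsf F}^2\leqslant\Mmax\tr\nabla^2 f\leqslant\Mmax\Mav p$, we get $V\leqslant\tfrac23 h^3\Mmax\Mav p$.

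Then $\sqrt{V/(mh)}=h\sqrt{(2/3)\Mmax\Mav p/m}\leqslant h\Mmax\sqrt{\Mav p}/m$, using $m\leqslant\Mmax$. The drift contributes $D/(mh)=(\Mmax\sqrt{\Mav}+M_\Delta)h\sqrt p/(2m)$. Adding these gives the constant $2\Mmax\sqrt{\Mav}+M_\Delta$ with room to spare.

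The delicate point is the final recursion step. From $x_{k+1}^2\leqslant(\rho x_k+D)^2+V$, with $\rho=1-mh$, we induct on the hypothesis $x_k\leqslant\rho^kx_0+B$ for a suitable $B$. Concretely, we show that $(\rho x+D)^2+V\leqslant(\rho x+D+V/(2B\,mh))^2$, which holds once $x\gtrsim B$. The small-$x$ case is handled separately by checking $\rho B+D$ against $B$.
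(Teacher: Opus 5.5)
Your overall route is the paper's: synchronous coupling, contraction of $\bvartheta-h\nabla f(\bvartheta)-\bL_0+h\nabla f(\bL_0)$, the It\^o split of $\bs\zeta$, and orthogonality of the stochastic integral to the $\mathcal F_0$-measurable term. The estimates $\mathbf E_\pi\|\nabla f\|_2^2=\mathbf E_\pi\Delta f\leqslant p\Mav$, $\|\nabla^2 f\|_{\mathsf F}^2\leqslant\Mmax\Delta f$ and \eqref{lip:2} are also the ones the paper uses. Your It\^o-isometry bound $\|\bs\zeta^{\rm mart}\|_{\mathbb L_2}^2\leqslant\frac23h^3\Mmax\Mav p$ is slightly sharper than the paper's triangle-inequality bound. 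Two steps, however, fail as written.

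\textbf{The Pythagoras step.} Orthogonality only removes $\mathbf E[\bs a^\top\bs\zeta^{\rm mart}]$. The drift part $\bs\zeta^{\rm drift}$ depends on the Brownian path on $[0,h]$ and is correlated with $\bs\zeta^{\rm mart}$. Hence $2\mathbf E[(\bs\zeta^{\rm drift})^\top\bs\zeta^{\rm mart}]$ does not vanish and has no sign, so your inequality $\|\bs a+\bs\zeta\|^2\leqslant(\|\bs a\|+\|\bs\zeta^{\rm drift}\|)^2+\|\bs\zeta^{\rm mart}\|^2$ is unjustified. What one actually gets, and what the paper uses, is $\|\bs a+\bs\zeta\|_{\mathbb L_2}^2=\|\bs a\|^2+2\mathbf E[\bs a^\top\bs\zeta^{\rm drift}]+\|\bs\zeta\|^2\leqslant(\|\bs a\|+D)^2+(D+\|\bs\zeta^{\rm mart}\|)^2$. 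The drift therefore enters twice, and $V$ must be taken as $(D+\sqrt{2/3}\,h^{3/2}\sqrt{\Mmax\Mav p})^2$. The extra term $D/\sqrt{mh}\leqslant D/(mh)$ doubles the drift contribution. The final constant becomes $(1+\sqrt{2/3})\Mmax\sqrt{\Mav}+M_\Delta$, which is still within \eqref{claim3c}. But the slack on $M_\Delta$ disappears, so you need the recursion with constant exactly $1$ in front of $\sqrt{V/(mh)}$.

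\textbf{The induction step.} With increment $V/(2B\,mh)$, closing the induction requires $D+V/(2B\,mh)\leqslant mhB$. This forces $B\gtrsim\sqrt V/(mh)\asymp h^{1/2}$ and loses the linear-in-$h$ rate. The ``small-$x$ case'' is also unnecessary. Since the right-hand side of the recursion is increasing in $x_k$, substitute $y=\rho^kx_0+B$ with $B=D/(mh)+\beta$, and set $z=\rho y+D\geqslant\rho\beta$. The target $x_{k+1}\leqslant\rho^{k+1}x_0+B$ is then $z^2+V\leqslant(z+mh\beta)^2$. This follows from $2mh\beta z+(mh\beta)^2\geqslant mh(2-mh)\beta^2\geqslant mh\beta^2$, so $\beta=\sqrt{V/(mh)}$ works. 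Alternatively, cite \cite[Lemma 7]{dalalyan2019user}, as the paper does; it gives $x_k\leqslant\rho^kx_0+C/A+B^2/(C+\sqrt A\,B)\leqslant\rho^kx_0+C/A+B/\sqrt A$. With both corrections your argument proves \eqref{claim3c}.
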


Comparing the discretization term in \eqref{claim3c} with
the previously known bound \eqref{eq:prior1}, we see that
the terms $\Mmax^{3/2}$ and $M_2\sqrt p$ are replaced,
respectively, by $\Mmax\sqrt{\Mav}$ and $M_\Delta$. The
first replacement improves the dependence on the smoothness
constant whenever $\Mav$ is significantly smaller than
$\Mmax$, while the second exploits the weaker trace-type
third-order smoothness condition \eqref{lip:2}. A more
quantitative assessment of these improvements is given in
\Cref{Sec:GLM}. Let us simply note here that, when $f$ is
a sum of coordinate-wise univariate functions and the
corresponding one-dimensional smoothness constants are
bounded independently of $p$, the bound in \eqref{claim3c}
has the expected $\sqrt p$ dependence on the dimension.

It is also instructive to translate \Cref{th:3} into a
mixing-time bound. Assume that $\wass_2(\nu_0^{\sf LMC}, 
\pi) \leqslant \varepsilon_0\sqrt{{p}/{m}}$, and let
$\varepsilon\in(0,1]$. Recall that $\kappa_\Delta = 
M_\Delta/m^{3/2}$ and $\kappa_2 = M_2/m^{3/2}$. 
Choosing the step size so that
\begin{align}
    mh =
    \frac{\varepsilon}{4\kappamax\sqrt{\kappav}
    +2\kappa_\Delta},
\end{align}
we have $h<1/\Mmax$, and \Cref{th:3} implies that
\begin{align}
    k \geqslant
    \frac{4\kappamax\sqrt{\kappav}
    +2\kappa_\Delta}{\varepsilon}
    \log\Big(\frac{2\varepsilon_0}{\varepsilon}\Big)
\end{align}
is sufficient to guarantee
$\wass_2(\nu_k^{\sf LMC},\pi)
\leqslant\varepsilon\sqrt{{p}/{m}}$. By contrast, 
applying the same calculation to the previously known
discretization bound \eqref{eq:prior1} would lead, up 
to universal numerical constants, to the condition
\begin{align}
    k
    \gtrsim
    \frac{\kappamax^{3/2}+\kappa_2\sqrt p}{\varepsilon}
    \log\left(\frac{\varepsilon_0}{\varepsilon}\right).
\end{align}
Thus the improvement in the discretization error directly
translates into an improved dependence of the mixing time
on the smoothness parameters.

Complementary approaches based on higher-order
discretizations or modified Langevin dynamics can lead to
improved mixing rates in some regimes, under assumptions
comparable to those imposed in this section or under 
stronger smoothness assumptions. A systematic
comparison with such refinements is beyond the scope of the
present paper, whose focus is on sharpening the analysis of
the standard LMC scheme. We refer to \cite{cheng2018a,
dalalyan2019user,dalalyan_riou_2018,Mou21} for results in
this direction.

\section{Stochastic-gradient Langevin dynamics}
\label{Sec:SGLD}

In statistics and machine learning, when $f$ is the
negative log-likelihood of a model parameterized by
$\btheta$, or the potentially penalized empirical risk
of a family of predictors parameterized by $\btheta$,
it often takes the form of a sum over the data points.
Formally, this corresponds to a potential function that
can be written as
\begin{align}\label{eq:fsum}
    f(\btheta)=\sum_{i=1}^n f_i(\btheta),
\end{align}
where $(f_i)_{1\leqslant i\leqslant n}$ is a collection
of functions from $\mathbb R^p$ to $\mathbb R$. We assume
that each $f_i$ has Lipschitz gradient and that $f$ is
strongly convex. We denote by $\mbar$, $\Mmaxbar$ and $\Mavbar$,
respectively, the strong-convexity constant, the global
smoothness constant and the average coordinate-wise
smoothness constant of the average potential $n^{-1}f$.
Equivalently, $f$ is $n\mbar$-strongly convex, has
$n\Mmaxbar$-Lipschitz gradient, and its coordinate-wise
average smoothness constant is $n\Mavbar$. In this finite-sum
setting, $\Mtwo$ is defined by \eqref{eq:Mtwo}.

Let $\btheta_*$ denote the unique minimizer of $f$, so 
that $\nabla f(\btheta_*) = 0$. We consider the 
stochastic-gradient Langevin dynamics update rule with
fixed point control variates
\begin{align}
    \bvartheta_{k+1}^{\sf SGLD} =
    \bvartheta_k^{\sf SGLD} - hn\big(\nabla f_{I_k}
    (\bvartheta_k^{\sf SGLD}) - \nabla f_{I_k}
    (\btheta_*)\big) + \sqrt{2h}\,\bxi_k,
\end{align}
where $(I_k)_{k\in\mathbb N}$ is an i.i.d.\ sequence,
uniform on $\{1,\ldots,n\}$, independent of
$(\bxi_k)_{k\in\mathbb N}$ and of the initial 
condition. As before, $(\bxi_k)_{k\in\mathbb N}$ is a
sequence of i.i.d.\ $\mathcal N(0,\bfI_p)$ random
vectors. The update above is the fixed point control-variate
version of SGLD. The estimator $n(\nabla f_{I_k}
(\btheta) -\nabla f_{I_k}(\btheta_*))$
is unbiased for $\nabla f(\btheta)$ and vanishes at
$\btheta=\btheta_*$. Its variance is therefore controlled
by the distance to the minimizer, a property that is central
to the analysis below. This distinguishes the algorithm
from versions of SGLD without the variance-reducing term,
which have been analyzed in
\cite{dalalyan2019user,durmus2019analysis}; see also
\cite{cheng20e} for related results beyond the strongly
convex setting. 

Other variance-reduced versions of stochastic-gradient
MCMC have been proposed using SVRG- and SAGA-type ideas
\citep{dubey2016,chatterji2018}. These
approaches differ from the fixed point control-variate
estimator considered here, which uses the values
$\nabla f_i(\btheta_*)$ to reduce the stochastic-gradient
noise near the minimizer. Related variance-reduced
Langevin algorithms have also been studied beyond the
strongly log-concave setting \citep{zou2019}.

\begin{theorem}\label{th:sgld}
    Let $f:\mathbb R^p\to\mathbb R$, given by 
    \eqref{eq:fsum}, be $n\mbar$-strongly convex and have 
    $n\Mmaxbar$-Lipschitz gradient, with $0<\mbar\leqslant 
    \Mmaxbar$. Let $\Mavbar$ be the average coordinate-wise 
    Lipschitz constant of $\tfrac1n\nabla f$ and $\Mtwo$ 
    be given by \eqref{eq:Mtwo}. The SGLD algorithm with 
    constant step-size
    $0\leqslant h \leqslant \frac{\mbar}{3n\Mtwo^2}$
    satisfies, for every $k\in\mathbb N$,
    \begin{align}
        \wass_2^2(\nu_k^{\sf SGLD},\pi)
        \leqslant
        (1+n\mbar h)^{-k}\wass_2^2(\nu_0^{\sf SGLD},\pi)
        + \Big\{\Mavbar+2\mbar+\frac{2\Mtwo^2}{\mbar}\Big\}
        \frac{hp}{\mbar}.
        \label{claim-sgld-3a}
    \end{align}
\end{theorem}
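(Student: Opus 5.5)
The plan is to rerun the one-step argument of \Cref{th:1}, with the exact gradient replaced by the control-variate estimator $\bs g=n(\nabla f_{I}(\bvartheta)-\nabla f_{I}(\btheta_*))$. Write $\bvartheta=\bvartheta_k^{\sf SGLD}$ and let $\bY\sim\pi$ be optimally coupled with $\bvartheta$. Take $I$, $\bxi$ and a Brownian motion $\bW$ independent of $(\bvartheta,\bY)$. Apply \eqref{cor:1.1} of \Cref{lem:2} (strong convexity $n\mbar$) to $\bX=\bvartheta-h\bs g$. Its orthogonality assumption holds because $\bW$ is independent of $\bX$. This gives
\begin{align}
(1+n\mbar h)\wass_2^2(\nu^+,\pi)\leqslant \|\bvartheta-h\bs g-\bY\|_{\mathbb L_2}^2+2\int_0^h\mathbf E[f(\bX_s)-f(\bY)]\,ds+n\mbar h^2p .
\end{align}
Conditionally on $(\bvartheta,\bY)$, $\bs g$ is unbiased for $\nabla f(\bvartheta)$. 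Hence the first term equals $\|\bvartheta-h\nabla f(\bvartheta)-\bY\|_{\mathbb L_2}^2+h^2\mathbf E\|\bs g-\nabla f(\bvartheta)\|_2^2$. I will bound it by \Cref{lem:basic-gd}. The variance satisfies $\mathbf E[\|\bs g-\nabla f(\bvartheta)\|^2\,|\,\bvartheta]\leqslant n\sum_i\|\nabla f_i(\bvartheta)-\nabla f_i(\btheta_*)\|^2\leqslant n^2\Mtwo^2\|\bvartheta-\btheta_*\|^2$.

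For the integral term, I split $f(\bX_s)-f(\bvartheta)$ into two parts. The first is $f(\bX+\sqrt{2s}\bxi)-f(\bX)$, which \eqref{eq:exp-upper:a} bounds by $n\Mavbar ps$. The second is $f(\bvartheta-h\bs g)-f(\bvartheta)$. For it I use the descent inequality $\leqslant -h\nabla f(\bvartheta)^\top\bs g+\tfrac{n\Mmaxbar h^2}{2}\|\bs g\|^2$, which is a stochastic analogue of \eqref{eq:exp-upper}. Taking the expectation over $I$ and using $\mathbf E\|\bs g\|^2=\|\nabla f\|^2+\mathrm{Var}$, the $\|\nabla f(\bvartheta)\|^2$ terms collect with coefficient $h^2(n\Mmaxbar h-1)\leqslant0$. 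This uses $n\Mmaxbar h\leqslant 1/3$, which follows from the step-size condition since $\Mtwo^2\geqslant\Mmaxbar^2\geqslant\mbar\Mmaxbar$. The variance terms then total at most $h^2(1+n\Mmaxbar h)n^2\Mtwo^2\|\bvartheta-\btheta_*\|^2_{\mathbb L_2}\leqslant\tfrac43h^2n^2\Mtwo^2\|\bvartheta-\btheta_*\|_{\mathbb L_2}^2$.

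The key step, and the expected obstacle, is to absorb this variance term, which depends on the distance of $\bvartheta$ to the mode rather than on $\wass_2$. I would write $\|\bvartheta-\btheta_*\|^2\leqslant(1+\gamma)\|\bvartheta-\bY\|^2+(1+\gamma^{-1})\|\bY-\btheta_*\|^2$. I would then use the standard bound $\mathbf E_\pi\|\bY-\btheta_*\|^2\leqslant p/(n\mbar)$ for $n\mbar$-strongly log-concave $\pi$. The first part is merged into the contraction coefficient $1-n\mbar h+c\,h^2n^2\Mtwo^2$. Under $h\leqslant\mbar/(3n\Mtwo^2)$ this coefficient is at most $1$, so the recursion reads $(1+n\mbar h)\wass_2^2(\nu^+,\pi)\leqslant\wass_2^2(\nu,\pi)+h^2pn\,C$. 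The second part yields a constant of order $\Mtwo^2/\mbar$ in $C$. The weight $\gamma$ (e.g. $\gamma=1$) must be tuned so that both the contraction condition and the constant $C=\Mavbar+2\mbar+2\Mtwo^2/\mbar$ come out as stated. This constant bookkeeping is where I expect care is needed, and I may need to slightly redistribute the $n\mbar h^2p$ term from \Cref{lem:2}.

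Finally, I unroll the recursion. The initial error is multiplied by $(1+n\mbar h)^{-k}$, and the accumulated bias is at most $h^2pnC\sum_{j\geqslant1}(1+n\mbar h)^{-j}=hpC/\mbar$. This gives \eqref{claim-sgld-3a}.
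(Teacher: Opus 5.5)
Everything before your final bookkeeping is valid, and the skeleton matches the paper's: the variance is bounded through $\Mtwo$ and $\|\bvartheta-\btheta_*\|$, split via the optimal coupling using $\mathbf E_\pi\|\bY-\btheta_*\|_2^2\leqslant p/(n\mbar)$, and the recursion is unrolled.

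The gap is the step you flagged. As you set it up, it cannot produce the constant $2\Mtwo^2/\mbar$.

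\begin{itemize}
\item With your estimate $1+n\Mmaxbar h\leqslant 4/3$, the contraction coefficient $1-n\mbar h+\frac43(1+\gamma)h^2n^2\Mtwo^2$ is at most $1$ for all admissible $h$ only if $\gamma\leqslant 5/4$.
\item The noise constant is then $\Mavbar+\mbar+\frac43(1+\gamma^{-1})\Mtwo^2/\mbar$, with $\frac43(1+\gamma^{-1})\geqslant\frac{12}{5}$ (it is $\frac83$ for $\gamma=1$).
\item Matching $\Mavbar+2\mbar+2\Mtwo^2/\mbar$ would require $\Mtwo^2\leqslant\frac52\mbar^2$. The one spare $\mbar$ you gain by redistributing $n\mbar h^2p$ cannot absorb an excess proportional to $\Mtwo^2/\mbar$, so no tuning of $\gamma$ helps.
\end{itemize}

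The loss comes from how you inject the stochastic gradient. Feeding $\bvartheta-h\bs g$ into \eqref{cor:1.1} and using the descent inequality makes $\mathrm{Var}=\mathbf E\|\bs g-\nabla f(\bvartheta)\|_2^2$ enter the bound on $(1+n\mbar h)\wass_2^2(\nu^+,\pi)$ with factor $(1+n\Mmaxbar h)h^2$.

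The paper instead couples $\bY$ with the exact LMC output $\bvartheta-h\nabla f(\bvartheta)+\sqrt{2h}\,\bxi$, independently of $I$. Then $\wass_2^2(\nu^+,\pi)$ is at most the one-step LMC bound \eqref{eq:2:9} plus $h^2\,\mathrm{Var}$, so the factor is only $(1+n\mbar h)h^2$. With $\gamma=1$, the paper's closing inequality $2(1+n\mbar h)\Mtwo^2/\mbar\leqslant\mbar+2\Mtwo^2/\mbar$ reduces to $2nh\Mtwo^2\leqslant\mbar$. With your factor it would read $2n\Mmaxbar h\Mtwo^2\leqslant\mbar^2$, which fails at $h=\mbar/(3n\Mtwo^2)$ once $\Mmaxbar>\frac32\mbar$.

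Within your route, the repair is to keep the sharper bound $n\Mmaxbar h\leqslant \mbar\Mmaxbar/(3\Mtwo^2)\leqslant t/3$, with $t=\mbar/\Mtwo\in(0,1]$, instead of rounding it to $1/3$. This quantity is small exactly when $\Mtwo^2/\mbar$ dominates.

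\begin{itemize}
\item Take $\gamma=(6-t)/(3+t)$. The contraction coefficient stays at most $1$, and the coefficient of $\Mtwo^2/\mbar$ is at most $3(3+t)/(6-t)$.
\item The spare $\mbar$ equals $t^2\Mtwo^2/\mbar$, so it suffices that $3(3+t)\leqslant(2+t^2)(6-t)$, i.e.\ $t^3-6t^2+5t-3\leqslant 0$. This holds on $(0,1]$.
\item Alternatively, keep $\gamma=1$ and do not round the contraction factor up to $(1+n\mbar h)^{-1}$ when summing the geometric series. This leads to the same polynomial inequality.
\end{itemize}

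Still, the paper's decomposition is the cleaner fix: an SGLD step is an LMC step plus a conditionally centered perturbation costing exactly $h^2\,\mathrm{Var}$. It reuses \eqref{eq:2:9} verbatim and gives a slightly sharper one-step inequality than yours.
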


The conclusions of this theorem lead to an upper bound on
the query complexity of SGLD, defined as the number of
individual gradient evaluations needed to make the
Wasserstein-2 error smaller than a prescribed threshold.
It is useful to compare this query complexity with the best
available bound of the same type in the literature, and
with the query complexity of LMC obtained in \Cref{ssec:2.2}.
To this end, we consider the simplified setting in which
the initial value of SGLD is $\btheta_*$. This gives
$\wass_2^2(\nu_0^{\sf SGLD},\pi) \leqslant \frac{p}{n\mbar}$. 
In the query complexity reported below, the fixed point
control variates $\nabla f_i(\btheta_*)$ are assumed to
have been precomputed and stored; otherwise, an additional
one-time cost of order $n$ should be added.

Let $Q^{\rm SGLD}_{\rm our}$ denote the number of queries
to individual gradients $\nabla f_i$, derived from
\Cref{th:sgld}, that is sufficient for SGLD to have
Wasserstein-2 error bounded by
$\varepsilon\sqrt{p/(n\mbar)}$. Since the conditions of
\Cref{th:sgld} imply $n\mbar h\leqslant 1/3$, we have
$1+n\mbar h\geqslant 0.95 e^{n\mbar h}$. Therefore,
\begin{align}
    k \geqslant \frac{1}{n\mbar h}\log(2
    \varepsilon^{-2})
\end{align}
makes the first term in the right-hand side of
\eqref{claim-sgld-3a} smaller than
$0.6\varepsilon^2p/(n\mbar)$. Setting
$\kapparms = \Mtwo/\mbar$ and, for every
$\varepsilon\leqslant 1$, choosing $h$ so that
\begin{align}
    n\mbar h\leqslant \frac{\varepsilon^2}{5 +
    2.5\kappav + 5\kapparms^2},
\end{align}
we get the second term in the right-hand side of
\eqref{claim-sgld-3a} smaller than
$0.4\varepsilon^2 p/(n\mbar)$. Thus, one can infer from
\Cref{th:sgld} that
\begin{align}
    Q^{\rm SGLD}_{\rm our}
    \leqslant
    \frac{5 + 2.5\kappav + 5\kapparms^2}
    {\varepsilon^2}\,
    \log(2\varepsilon^{-2}).
\end{align}
Note that the dominant term in the right-hand side is
$\kapparms^2$, since
$1\leqslant \kappav\leqslant\kappamax\leqslant
\kapparms$.

For the fixed point control-variate SGLD considered here,
a directly comparable bound in the prior literature can be
derived from \cite[Theorem 2]{BrosseDM18}. Their analysis
requires additional assumptions, involving third- and
fourth-order derivatives and convexity of each $f_i$, 
which are not needed in \Cref{th:sgld}. Their result yields
\begin{align}
    Q^{\rm SGLD}_{\rm prior}
    \leqslant
    \textsf{C}\,
    \frac{\kapparms^3}{\varepsilon^2}
    \log(\varepsilon^{-1})
\end{align}
for some universal constant $\textsf{C}$. Compared 
to our result, this contains an extra factor
$\kapparms$, which can be large in ill-conditioned
problems; see \Cref{Sec:GLM} for further discussion.
For comparison, \cite{chatterji2018} also studied
variance-reduced stochastic-gradient MCMC algorithms,
including SAGA-LD, SVRG-LD and a control-variate
underdamped Langevin algorithm. For the latter method,
which may be viewed as an underdamped counterpart of the
control-variate SGLD considered here, their bound gives
$Q^{\sf SGULD}_{\rm prior}\leqslant
\textsf{C}\,\frac{\kapparms^{5.5}}{\varepsilon^3}
\log(\varepsilon^{-1})$. For ease of comparison, we 
report these bounds in
\Cref{table:sgld-comparison}. We do not include the
SVRG- and SAGA-type Langevin algorithms studied in
\cite{chatterji2018}, because their complexity bounds
are stated for an absolute Wasserstein accuracy.
Translating these bounds to the present accuracy level
$\varepsilon\sqrt{p/(n\mbar)}$ introduces additional factors
depending on $n$, and does not improve the comparison in
the large-sample regimes considered here.

Let us now compare the query complexity of SGLD with that
of LMC. Each iteration of LMC requires the computation of
all $n$ gradients $\nabla f_i$. Hence its query complexity
is bounded by $n$ times the number of steps needed to
achieve an error smaller than
$\varepsilon\sqrt{p/(n\mbar)}$. This gives the value reported
in \Cref{table:sgld-comparison}: LMC has a better
dependence on the condition numbers, linear in $\kappav$
and $\kappamax$ instead of quadratic in $\kapparms$, but a
significantly worse dependence on $n$. This confirms and
quantifies the fact that, in statistical problems with a
very large sample size, SGLD can outperform LMC.

\begin{table}[t]
    \centering
    \begin{tabular}{@{}ccc|c@{}}
    \toprule
    Our SGLD &  SGLD \cite{BrosseDM18} & SGULD 
    \cite{chatterji2018} & Our LMC
    \\
    \midrule
    $\displaystyle
    \frac{\kapparms^2}
    {\varepsilon^2}\log(\varepsilon^{-1})$
    &
    $\displaystyle
    \frac{\kapparms^3}{\varepsilon^2}
    \log(\varepsilon^{-1})$
    &
    $\displaystyle
    \frac{\kapparms^{5.5}}{\varepsilon^3}\log(
    \varepsilon^{-1})$
    &
    $\displaystyle
    n\Big\{\frac{\kappav}{\varepsilon^2}
    \bigvee \kappamax \Big\}\log(\varepsilon^{-1})$\\
    \bottomrule
    \end{tabular}
    \caption{Comparison of orders of query complexities 
    sufficient to guarantee a $\wass_2$-error at most
    $\varepsilon\sqrt{p/(n\mbar)}$. For SGLD, one query
    corresponds to one individual gradient evaluation,
    after precomputation of the fixed point control
    variates. For LMC, one iteration requires $n$ such
    evaluations.}
    \label{table:sgld-comparison}
\end{table}

\section{Application to Posteriors of Generalized Linear 
Models}\label{Sec:GLM}

A particularly attractive feature of the bounds 
established in the previous sections is their improved 
dependence on the relevant condition numbers. To 
quantify the impact of this improvement, we consider 
here the concrete example of sampling from a density 
whose potential is of the form
\begin{align}\label{GLM}
    f(\btheta) = \sum_{i=1}^n g_i(\bX_i^\top\btheta),
\end{align}
where $g_i:\mathbb R\to\mathbb R$ are convex functions, 
assumed to be three times continuously differentiable, 
and $\bX_1,\ldots,\bX_n$ are independent random vectors 
drawn from the Gaussian distribution $\mathcal N_p(0, 
\bSigma)$. For simplicity, we assume $\bSigma$ to be 
invertible. 
This corresponds to the posterior distribution of a
generalized linear model \citep{gelman2013,ibrahim1991,
nelder1972}, either with an improper flat prior or 
with a Gaussian prior whose covariance matrix is 
proportional to $(\sum_{i=1}^n \bX_i\bX_i^\top)^{-1}$. 
The dependence on the responses, if present, is 
absorbed into the functions $g_i$. The potential can 
also be seen as the empirical loss associated with a 
random-features model, or equivalently with a 
two-layer neural network in which the hidden-layer
weights are fixed and only the output-layer weights 
are trained \citep{bach2017,gerace20,mei2022}. 
This example is particularly relevant for the present 
paper, since it allows for a rather precise 
quantitative assessment of the constants associated 
with the full potential $f$, namely $\Mmax$, $\Mav$, 
$\Mtwo$, $M_\Delta$ and $M_2$.

\begin{proposition}\label{prop:glm-constants}
    Let the potential $f$ be given by \eqref{GLM}, 
    where the functions $g_i:\mathbb R\to\mathbb R$ 
    are three times continuously differentiable, 
    and satisfy
    \begin{align}
        \sup_{1\leqslant i\leqslant n}
        \sup_{t\in\mathbb R}(|g_i''(t)| + |g_i'''
        (t)|)\leqslant  C_g .
    \end{align}
    Let $(\bX_i)_{1\leqslant i\leqslant n}$ be independent
    $\mathcal N_p(0,\bSigma)$ random vectors with
    $p\leqslant n$. Then, with probability at least $1/2$,
    the constants $\Mmax$, $\Mav$, $M_2$, $M_\Delta$ and
    $\Mtwo$, all associated with the full potential $f$,
    are bounded from above, up to multiplicative constants
    depending only on $C_g$, by the quantities displayed
    in the table:
    \begin{center}
    \begin{tabular}{@{}c|cc|cc|c@{}}
        \toprule
        $\bSigma$
        & $\Mmax$
        & $\Mav$
        & $M_2\sqrt{p}$
        & $M_\Delta$
        & $\Mtwo$
        \\
        \midrule
        $\mathbf I_p$
        & $n$
        & $n$
        & $n\sqrt p$
        & $n\sqrt p$
        & $p$
        \\
        $\mathbf I_p+\mathbf 1_p\mathbf 1_p^\top$
        & $np$
        & $n$
        & $np^2$
        & $np$
        & $p$
        \\
        \bottomrule
    \end{tabular}
    \end{center}
    Here, $\Mmax$ is the global Lipschitz constant of
    $\nabla f$, while $\Mav$ is the average of the
    coordinate-wise Lipschitz constants defined in
    \eqref{coordM}. The constant $M_2$ is the Lipschitz
    constant of $\nabla^2 f$ when matrices are equipped
    with the operator norm. The constant $M_\Delta$ is
    defined by \eqref{lip:2}, and $\Mtwo$ is the root mean
    square of the Lipschitz seminorms of $\nabla f_i$ with
    $f_i(\btheta)=g_i(\bX_i^\top\btheta)$.
\end{proposition}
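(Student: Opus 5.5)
Every constant in the table can be written explicitly in terms of the design vectors, after which each bound follows from a standard Gaussian concentration fact. Write $\bfX$ for the $n\times p$ matrix with rows $\bX_i^\top$. Then
\[
\nabla^2 f(\btheta)=\sum_i g_i''(\bX_i^\top\btheta)\bX_i\bX_i^\top ,
\qquad
\partial_j^2 f=\sum_i g_i''\,X_{ij}^2 .
\]
The third derivative is $\sum_i g_i'''\,\bX_i\otimes\bX_i\otimes\bX_i$, and
\[
\nabla\Delta f(\btheta)=\sum_i g_i'''(\bX_i^\top\btheta)\|\bX_i\|_2^2\,\bX_i .
\]
Using $|g_i''|,|g_i'''|\leqslant C_g$, I obtain the deterministic bounds below:
\begin{align}
\Mmax&\leqslant C_g\|\bfX^\top\bfX\|_{\rm op},\qquad
\Mav\leqslant \frac{C_g}{p}\sum_{i}\|\bX_i\|_2^2,\qquad
\Mtwo^2\leqslant \frac{C_g^2}{n}\sum_i\|\bX_i\|_2^4,\\
M_2&\leqslant C_g\sup_{\|\bu\|_2=1}\sum_i|\bX_i^\top\bu|^3,\qquad
M_\Delta\sqrt p\leqslant C_g\sup_{\|\bu\|_2=1}\sum_i\|\bX_i\|_2^2\,|\bX_i^\top\bu| .
\end{align}
For $M_\Delta$, the last bound is obtained by testing $\nabla\Delta f$ against a unit vector $\bu$. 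For $M_2$, I use $|\bu^\top(\nabla^2f(\btheta)-\nabla^2f(\btheta'))\bu|\leqslant C_g\sum_i |\bX_i^\top\bu|^2|\bX_i^\top(\btheta-\btheta')|$ together with Hölder's inequality (for a symmetric matrix it suffices to control the quadratic form). This gives $M_2\leqslant C_g\sup_{\bu}\sum_i|\bX_i^\top\bu|^3$ up to a constant.

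Next I bound each random quantity on an event of probability at least $1/2$, obtained as a union of a few events each of probability at least $0.9$. Markov's inequality handles the sums of norms:
\[
\mathbf E\sum_i\|\bX_i\|_2^2=n\tr\bSigma,\qquad \mathbf E\sum_i\|\bX_i\|_2^4\lesssim n(\tr\bSigma)^2 .
\]
This yields $\Mav\lesssim n\tr\bSigma/p$ and $\Mtwo\lesssim\tr\bSigma$, which equal $n,n$ and $p,p$ in the two cases, since $\tr\bSigma=p$ or $2p$. For the operator norm, $n\geqslant p$ and the standard Wishart bound give $\|\bfX^\top\bfX\|_{\rm op}\lesssim n\|\bSigma\|_{\rm op}$. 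This equals $n$, or $n(p+1)$ for the equicorrelated matrix.

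For the third-order quantities I write $\bX_i=\bSigma^{1/2}\bZ_i$ and use $\|\bX_i\|_2^2\leqslant\max_i\|\bX_i\|_2^2$. Then $M_\Delta\sqrt p\leqslant C_g\max_i\|\bX_i\|_2^2\cdot\sqrt{n\|\bfX^\top\bfX\|_{\rm op}}$ by Cauchy--Schwarz, and $M_2\leqslant C_g\max_i\|\bX_i\|_2\,\|\bfX^\top\bfX\|_{\rm op}$.

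The main obstacle is that $\max_i\|\bX_i\|_2^2$ costs a $\log n$ factor over $\tr\bSigma$. The table claims no logarithm, and the $M_\Delta$ entries ($n\sqrt p$ for $\bfI_p$, $np$ for the equicorrelated case) are tighter than Cauchy--Schwarz with a maximum gives. I would therefore bound these more carefully.

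For $M_\Delta$, I decompose $\|\bX_i\|_2^2=\tr\bSigma+(\|\bX_i\|_2^2-\tr\bSigma)$. The main part is $\tr\bSigma\,\|\bfX^\top\bu\|_1\leqslant\tr\bSigma\sqrt{n}\,\|\bfX\|_{\rm op}$, which is $p\cdot n$ for identity covariance (giving $M_\Delta\lesssim n\sqrt p$) and about $p\cdot n\sqrt p$ for the equicorrelated case (giving $M_\Delta\lesssim np$). The fluctuation part is handled by Cauchy--Schwarz: its second moment is $\mathbf E\sum_i(\|\bX_i\|_2^2-\tr\bSigma)^2\lesssim n\|\bSigma\|_{\mathsf F}^2$, which gives a term of the same or smaller order. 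I expect this step to need the most care.

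For $M_2\sqrt p$, the crude bound $\max_i\|\bX_i\|_2\cdot n\|\bSigma\|_{\rm op}\cdot\sqrt p$ gives $n\sqrt p$ resp.\ $np^2$ up to logs. To remove the logs I would use the known bound $\sup_{\bu}\sum_i|\bX_i^\top\bu|^3\lesssim \|\bSigma\|_{\rm op}^{3/2}(n+p^{3/2})$ for Gaussian designs, with $p\leqslant n$. Alternatively, since these are upper bounds, I would accept that the table's $M_2$ column only needs to be an upper estimate and use Markov's inequality on $\sum_i\|\bX_i\|_2^3$.

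Finally, I would intersect the events via a union bound so that the total failure probability is at most $1/2$, and absorb all numerical constants and powers of $C_g$.
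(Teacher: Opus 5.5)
Your treatment of $\Mmax$, $\Mav$, $\Mtwo$ and $M_\Delta$ is sound and close to the paper's. You use the same explicit formulas, Markov's inequality for $\sum_i\|\bX_i\|_2^2$ and $\sum_i\|\bX_i\|_2^4$, and the Wishart bound $\|\bfS\|_{\rm op}\lesssim n\|\bSigma\|_{\rm op}$; the paper instead derives that bound from a fourth-moment event. The logarithmic ``obstacle'' you raise for $M_\Delta$ is self-inflicted. The paper writes $\nabla\Delta f(\btheta)=\sum_i a_i\bX_i$ with $a_i=g_i'''(\bX_i^\top\btheta)\|\bX_i\|_2^2$ and uses $\|\sum_i a_i\bX_i\|_2^2\leqslant\|\bfS\|_{\rm op}\sum_i a_i^2\leqslant C_g^2\|\bfS\|_{\rm op}\sum_i\|\bX_i\|_2^4$. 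No maximum appears, so your decomposition around $\tr\bSigma$ works but is unnecessary.

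The genuine gap is the $M_2$ column for $\bSigma=\mathbf I_p$.
\begin{itemize}
\item Your crude bound $\max_i\|\bX_i\|_2\,\|\bfS\|_{\rm op}\sqrt p$ gives $np$ there, not $n\sqrt p$.
\item The Gaussian estimate $\sup_{\bu}\sum_i|\bX_i^\top\bu|^3\lesssim\|\bSigma\|_{\rm op}^{3/2}(n+p^{3/2})$ is correct, e.g.\ by Chevet's inequality. It gives $M_2\lesssim n+p^{3/2}$ for $\mathbf I_p$ and $M_2\lesssim p^{3/2}(n+p^{3/2})$ for the equicorrelated matrix. Both match the table only when $p^{3/2}\lesssim n$, not for all $p\leqslant n$.
\item For the equicorrelated matrix, your Markov fallback does close the gap: $M_2\leqslant C_g\sum_i\|\bX_i\|_2^3\lesssim n(\tr\bSigma)^{3/2}\asymp np^{3/2}$ for every $p\leqslant n$.
\item For $\mathbf I_p$ the same fallback only yields $M_2\sqrt p\lesssim np^2$. ``Accepting an upper estimate'' does not prove the stated entry $n\sqrt p$.
\end{itemize}

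This gap cannot be closed, and the paper's route does not close it either. The paper gets $M_2\leqslant Cn\|\bSigma\|_{\rm op}^{3/2}$ from $M_2^2\leqslant C\|\bfS\|_{\rm op}\sup_{\bu}\sum_i(\bX_i^\top\bu)^4$ together with the event $\sup_{\bu}\sum_i(\bX_i^\top\bu)^4\leqslant Cn\|\bSigma\|_{\rm op}^2$. For $\bSigma=\mathbf I_p$ that event fails as soon as $p\gg\sqrt n$, since $\bu=\bX_1/\|\bX_1\|_2$ alone contributes $\|\bX_1\|_2^4\approx p^2$.

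In fact the entry itself is false for $p\gg n^{2/3}$. Take all $g_i(t)=\log(1+e^t)$ and pick $s$ with $g'''(s)\neq0$. Set $\bu=\bX_1/\|\bX_1\|_2$ and $\btheta=s\bX_1/\|\bX_1\|_2^2$, so that $\bX_1^\top\btheta=s$. Letting $\btheta'=\btheta+t\bu$ with $t\to0$ gives $M_2\geqslant|\sum_i g'''(\bX_i^\top\btheta)(\bX_i^\top\bu)^3|\geqslant|g'''(s)|\,\|\bX_1\|_2^3-C_g\sum_{i\geqslant2}|\bX_i^\top\bu|^3$. With high probability this is at least $c\,p^{3/2}-Cn$, because the $\bX_i^\top\bu$, $i\geqslant2$, are i.i.d.\ $\mathcal N(0,1)$ given $\bX_1$. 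Hence $M_2\gg n$ when $p\gg n^{2/3}$.

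So what your argument proves for $\mathbf I_p$, namely $M_2\lesssim n+p^{3/2}$, is the correct order. The table entry needs the extra condition $p^{3/2}\lesssim n$. The other entries are unaffected, since they only use $\sum_i\|\bX_i\|_2^4$, $\sum_i\|\bX_i\|_2^2$ and $\|\bfS\|_{\rm op}$.
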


The entries of the table in \Cref{prop:glm-constants} are 
also sharp in general. Indeed, standard non-asymptotic random 
matrix estimates for sub-Gaussian matrices
\citep{vershynin2018high} imply that, under a suitable 
nondegeneracy condition on the functions $g_i$, ensuring 
in particular that their second- and third-order derivatives 
do not vanish on sets of positive measure, all entries of 
the table are also lower bounds, up to multiplicative 
constants, and on an event of positive probability, for the 
corresponding quantities $\Mmax$, $\Mav$, $M_2$, $M_\Delta$ 
and $\Mtwo$.

This proposition shows that, for uncorrelated covariates,
that is, when $\bSigma$ is the identity matrix, replacing
$\Mmax$ by $\Mav$ and $M_2\sqrt p$ by $M_\Delta$ does not
improve the order of the bounds. In contrast, for 
correlated high-dimensional covariates, as illustrated 
by the covariance matrix $\bfI_p+\mathbf 1_p\mathbf 
1_p^\top$, these replacements lead to a substantial 
improvement: a factor of order $p$ in the 
Lipschitz-gradient LMC bound, and a factor of order 
$\sqrt p$ in the Hessian-smooth LMC bound.

To translate these estimates into the complexity bounds
proved in the previous sections, let $\mbar$ denote the
strong-convexity constant of the average potential
$n^{-1}f$. We impose the normalization $\mbar\asymp 1$.
This is the natural scaling in the present Gaussian-design
model; for instance, it holds with positive probability,
and in fact with high probability under standard
aspect-ratio assumptions, if the functions $g_i''$ are
uniformly bounded away from zero. Under this normalization,
the strong-convexity constant of the full potential $f$ is
of order $n$, and the target accuracy in the following
complexity comparisons is $\varepsilon\sqrt{p/n}$.

We report query complexities, where one query means one
evaluation of an individual gradient $\nabla f_i$. Thus
one iteration of LMC costs $n$ queries. Logarithmic
factors in $\varepsilon^{-1}$ and numerical constants are
suppressed. For compactness, write
$\bSigma_{\rm corr} = \bfI_p+\mathbf 1_p\mathbf 1_p^\top$.
The resulting orders are summarized in
\Cref{tab:glm-complexities}.
\begin{table}[t]
    \centering
    \begin{tabular}{@{}c|cccc|cc@{}}
    \toprule
    $\bSigma$
    & \shortstack{Prior LMC \cite{durmus2019analysis}
    \\Lip. grad.}
    & \shortstack{Our LMC\\\Cref{th:1}}
    & \shortstack{Prior LMC \cite{durmus2019,dalalyan2019user}
    \\Lip. Hess.}
    & \shortstack{Our LMC\\\Cref{th:3}}
    & \shortstack{Our SGLD\\\Cref{th:sgld}}
    & \shortstack{Prior SGLD\\\cite{BrosseDM18}}
    \\
    \midrule
    $\bfI_p$
    & $\displaystyle n\varepsilon^{-2}$
    & $\displaystyle n\varepsilon^{-2}$
    & $\displaystyle n\varepsilon^{-1}$
    & $\displaystyle n\varepsilon^{-1}$
    & $\displaystyle p^2\varepsilon^{-2}$
    & $\displaystyle p^3\varepsilon^{-2}$
    \\
    $\bSigma_{\rm corr}$
    & $\displaystyle np\,\varepsilon^{-2}$
    & $\displaystyle n\big(\varepsilon^{-2}\vee p\big)$
    & $\displaystyle np^{3/2}\varepsilon^{-1}$
    & $\displaystyle np\,\varepsilon^{-1}$
    & $\displaystyle p^2\varepsilon^{-2}$
    & $\displaystyle p^3\varepsilon^{-2}$
    \\
    \bottomrule
    \end{tabular}
    \caption{Orders of query complexities in the
    Gaussian-design GLM example for target accuracy
    $\varepsilon\sqrt{p/n}$ in $\wass_2$. Logarithmic 
    factors and the SGLD precomputation cost
    are omitted.}
    \label{tab:glm-complexities}
\end{table} 
In the isotropic case, the new LMC bounds have the same
order as the previously known ones. In contrast, for the
correlated design $\bSigma=\bSigma_{\rm corr}$, the
improvement is substantial. For the Lipschitz-gradient
bound, the dependence on the condition number is reduced
from $p\varepsilon^{-2}$ to $\varepsilon^{-2}\vee p$, that
is by a factor that is often of the order of dimension $p$.
For the Hessian-smooth bound, the improvement comes from
replacing both $\Mmax^{3/2}$ by $\Mmax\sqrt{\Mav}$ and
$M_2\sqrt p$ by $M_\Delta$, leading to an improvement by
a factor of order $\sqrt p$ in the displayed regime.

The same example also illustrates the improvement obtained
for the fixed point control-variate SGLD. Assuming that the
normalization $\mbar =1$ is applied, the condition number
$\kapparms=\Mtwo/\mbar$ is of order $p$ for both
covariance structures. Therefore, up to logarithmic
factors, \Cref{th:sgld} gives a query complexity of order
$p^2\varepsilon^{-2}$, whereas the directly comparable
prior SGLD bound scales as $p^3\varepsilon^{-2}$.

\section{Conclusion}
\label{Sec:conclusion}

In this paper, we revisited nonasymptotic Wasserstein
bounds for Langevin Monte Carlo in the strongly
log-concave setting. We showed that the leading
discretization error of the standard LMC algorithm is
controlled by the average coordinate-wise smoothness of
the potential, not by the global smoothness constant. As
a consequence, the maximal condition number appearing in
previous guarantees can be replaced, in the discretization
term, by an average condition number. This refinement is
particularly significant in high-dimensional problems in
which the global smoothness is inflated by a small number
of highly curved directions. The proof is based on a short
refinement of the synchronous coupling argument, and
therefore remains close to the probabilistic intuition
behind earlier analyses of LMC.

Motivated by this observation, we revisited the dependence
on smoothness and condition numbers in several closely
related guarantees. The guiding principle is that the
condition numbers appearing in earlier bounds may be
overly pessimistic because they are built from global
worst-case smoothness quantities. For variable step sizes,
this leads to a bound with improved dependence on the
conditioning and without the logarithmic dependence on the
target precision. Under a Lipschitz-continuity assumption
on the Laplacian, it leads to a sharper higher-order
discretization bound, where the usual Hessian-Lipschitz
contribution is replaced by a weaker trace-type
third-order smoothness quantity. Finally, in the finite-sum
setting, this viewpoint yields improved guarantees for
SGLD with fixed point control variates.

The example of generalized linear models illustrates that 
these refinements can lead to substantial gains in 
high-dimensional problems with correlated covariates. 
Specifically, replacing $\Mmax$ by $\Mav$ improves the
Lipschitz-gradient LMC bound by a factor $p$.
Under the Hessian-smoothness assumption, replacing
$M_2\sqrt p$ by $M_\Delta$, together with the replacement
of $\Mmax^{3/2}$ by $\Mmax\sqrt{\Mav}$, improves the
corresponding bound by a factor of order $\sqrt p$. 

Several questions remain open. One natural direction is
to investigate whether similar average-smoothness
phenomena hold for other sampling algorithms, such as
underdamped Langevin schemes \citep{cheng2018a,
dalalyan_riou_2018,zhang2023improved},
proximal or mirror Langevin methods
\citep{durmus2018efficient,brosse2017proximal,
ahn2021efficient,chen22c}, or higher-order and
randomized midpoint discretizations
\citep{shen2019randomized,he2020ergodicity,Mou21,
yu2025parallel}. Another direction is to
develop versions of the present bounds under weaker
convexity assumptions or for more general stochastic
gradient estimators \citep{raginsky2017nonconvex,
cheng20e,ma2015complete}. We leave these questions for
future work.

\section{Postponed Proofs}\label{Sec:proofs}

To streamline the presentation and keep the discussion
focused on the main contributions of the paper, several
technical proofs were deferred from the previous 
sections. They are collected in the present section.

\subsection{Proofs of lemmas used in the error bound of 
the LMC}

We gather in this section the proofs of the lemmas used 
in the proof of \Cref{th:1}. We begin with an auxiliary 
result.

\begin{lemma}\label{lem:1c}
    Let $\bX$ and\/ $\bY$ be two $p$-dimensional 
    square-integrable random vectors. For a Brownian 
    motion $(\bW_t)_{t\geqslant 0}$ independent 
    of\/ $\bY$, we set $\bX_t = \bX+\sqrt{2}\,\bW_t$ 
    and define the process $(\bL_t)_{t\geqslant 0}$ by 
    \begin{align}\label{LD}
        d\bL_t = -\nabla f(\bL_t)\,dt + \sqrt{2}\,
        d\bW_t,\qquad t\geqslant 0;\qquad \bL_0 = \bY. 
    \end{align}
    Let $f$ be $m$-strongly convex and set $\psi(h) = \| 
    \bX_h-\bL_h\|_2^2 + m\int_0^h\|\bX_h-\bL_s\|_2^2\,ds$.
    We have
    \begin{align}
        \psi(h)  &\leqslant 
        \|\bX -\bL_0\|_2^2 + \int_0^h2\big(f(\bX_s) 
        - f(\bL_s)\big)\,ds + R_1(h),\label{eq:7}\\
        \psi(h) 
        &\leqslant \|\bX -\bL_0\|_2^2 + 2h\big(f(\bX_h) 
        - f(\bL_h) \big) + R_2(h),\label{eq:7b}
    \end{align}
    where $R_1(h)$ and $R_2(h)$ are random variables defined by
    \begin{align}
        R_1(h) &= m\int_0^h\big\{\|\bX_h-\bL_s\|_2^2 - 
        \|\bX_s-\bL_s\|_2^2\big\}ds,\\
        R_2(h) &= 2\int_0^h \big\{ f(\bL_h) - f(\bL_s) - 
        \sqrt{2}(\bW_h-\bW_s)^\top\nabla f(\bL_s)\big\}\,ds.
    \end{align}
\end{lemma}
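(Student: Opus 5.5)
The plan is a purely pathwise argument. Both inequalities follow from one observation: under the synchronous coupling the Brownian increments cancel in the difference $\bX_t-\bL_t$. Indeed, $\bX_t-\bL_t = \bX-\bY+\int_0^t\nabla f(\bL_s)\,ds$ almost surely. So $t\mapsto \bX_t-\bL_t$ is continuously differentiable with derivative $\nabla f(\bL_t)$, and no It\^o calculus is needed. The fundamental theorem of calculus gives
\begin{align}
    \|\bX_h-\bL_h\|_2^2 = \|\bX-\bL_0\|_2^2 + 2\int_0^h (\bX_s-\bL_s)^\top \nabla f(\bL_s)\,ds .
\end{align}
Each of the two claims then comes from one application of the strong convexity inequality, in the form
\begin{align}
    \nabla f(\by)^\top(\bu-\by)\leqslant f(\bu)-f(\by)-\tfrac m2\|\bu-\by\|_2^2,
\end{align}
with a different choice of the point $\bu$ in each case.

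For \eqref{eq:7}, I take $\bu=\bX_s$ and $\by=\bL_s$ inside the integral. This gives
\begin{align}
    \|\bX_h-\bL_h\|_2^2 \leqslant \|\bX-\bL_0\|_2^2 + 2\int_0^h\big(f(\bX_s)-f(\bL_s)\big)\,ds - m\int_0^h\|\bX_s-\bL_s\|_2^2\,ds .
\end{align}
Adding $m\int_0^h\|\bX_h-\bL_s\|_2^2\,ds$ to both sides turns the left-hand side into $\psi(h)$. On the right-hand side, the two $m$-integrals combine exactly into $R_1(h)$, which proves \eqref{eq:7}.

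For \eqref{eq:7b}, I first rewrite the integrand using $\bX_s=\bX_h-\sqrt2(\bW_h-\bW_s)$:
\begin{align}
    (\bX_s-\bL_s)^\top\nabla f(\bL_s) = (\bX_h-\bL_s)^\top\nabla f(\bL_s) - \sqrt2(\bW_h-\bW_s)^\top\nabla f(\bL_s).
\end{align}
I then apply strong convexity with $\bu=\bX_h$ and $\by=\bL_s$. This bounds the first term by $f(\bX_h)-f(\bL_s)-\frac m2\|\bX_h-\bL_s\|_2^2$. Moving the resulting $m$-integral to the left-hand side again produces $\psi(h)$. The remaining right-hand side is
\begin{align}
    \|\bX-\bL_0\|_2^2 + 2\int_0^h\big\{f(\bX_h)-f(\bL_s)-\sqrt2(\bW_h-\bW_s)^\top\nabla f(\bL_s)\big\}\,ds .
\end{align}
Adding and subtracting $f(\bL_h)$ inside the braces yields exactly $2h\big(f(\bX_h)-f(\bL_h)\big)+R_2(h)$.

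The only step needing care is the first one. There I must check that $\bX_h$ is not required to be adapted. This holds because the argument is a deterministic calculus identity along each path, not a stochastic integral. Square integrability, together with existence of the strong solution of \eqref{LD} (guaranteed by $\nabla f$ being Lipschitz), ensures that all quantities are finite. No further obstacle is expected.
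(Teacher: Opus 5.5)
Your proposal is correct and follows the paper's proof essentially step for step. It rests on the same pathwise identity $\|\bX_h-\bL_h\|_2^2=\|\bX-\bL_0\|_2^2+2\int_0^h(\bX_s-\bL_s)^\top\nabla f(\bL_s)\,ds$, obtained from the cancellation of the Brownian increments. Strong convexity is then applied at $\bL_s$ toward $\bX_s$ for \eqref{eq:7}, and, after writing $\bX_s=\bX_h-\sqrt2(\bW_h-\bW_s)$ and adding and subtracting $f(\bL_h)$, toward $\bX_h$ for \eqref{eq:7b}.
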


\def\bsg{\boldsymbol{g}}
\begin{proof}
    Recall that if $\bsg:[0,+\infty[\to\mathbb R^p$ is a 
    continuously differentiable function, then $t\mapsto 
    \|\bsg(t)\|_2^2$ 
    is continuously differentiable as well and
    $\frac{d}{dt}\|\bsg(t)\|_2^2 = 2\bsg(t)^\top \bsg'(t)$.
    Hence, for every $h\geqslant 0$,
    \begin{align}
        \|\bsg(h)\|_2^2 = \|\bsg(0)\|_2^2 + 2\int_0^h \bsg(s)^\top \bsg'(s)\,ds.
    \end{align}
    We apply this formula to $\bsg(t) = \bX_t-\bL_t = 
    \bX-\bL_0 + \int_0^t \nabla f(\bL_s)\,ds$. 
    Therefore, $\bsg$ is continuously differentiable and
    $\bsg'(t)=\nabla f(\bL_t)$. We thus get
    \begin{align}\label{eq:aux-1c-1}
        \|\bX_h-\bL_h\|_2^2 & = \|\bX-\bL_0\|_2^2
        + 2\int_0^h (\bX_s-\bL_s)^\top \nabla f(\bL_s)\,ds.
    \end{align}
    Since $f$ is $m$-strongly convex, we have for every $s\in[0,h]$,
    $f(\bX_s) \geqslant f(\bL_s) + (\bX_s - \bL_s)^\top \nabla f(\bL_s)   
    + \frac{m}{2}\|\bX_s-\bL_s\|_2^2$.
    This yields
    \begin{align}\label{eq:lem1:3a}
        2(\bX_s-\bL_s)^\top \nabla f(\bL_s)
        \leqslant 2\big(f(\bX_s)-f(\bL_s)\big)
        - m\|\bX_s-\bL_s\|_2^2.
    \end{align}
    Combining this inequality with \eqref{eq:aux-1c-1} and
    rearranging the terms, we get the first claim of the lemma. 
    
    The second claim is proved as follows. For every $s\in[0,h]$, we decompose $\bX_s-\bL_s= \bX_h-\bL_s - \sqrt{2}\,(\bW_h - 
    \bW_s)$. Hence,
    \begin{align}\label{eq:lem1:2}
        (\bX_s-\bL_s)^\top \nabla f(\bL_s)
        &= (\bX_h-\bL_s)^\top \nabla f(\bL_s)
        - \sqrt{2}\,(\bW_h-\bW_s)^\top \nabla f(\bL_s).
    \end{align}
    Since $f$ is $m$-strongly convex, we have for every $s\in[0,h]$,
    $f(\bX_h) \geqslant f(\bL_s) + (\bX_h - \bL_s)^\top \nabla f(\bL_s)   
    + \frac{m}{2}\|\bX_h-\bL_s\|_2^2$.
    This yields
    \begin{align}\label{eq:lem1:3}
        (\bX_h-\bL_s)^\top \nabla f(\bL_s)
        \leqslant f(\bX_h)-f(\bL_s)
        - \frac{m}{2}\|\bX_h-\bL_s\|_2^2.
    \end{align}
    Combining \eqref{eq:lem1:2} and \eqref{eq:lem1:3}, we 
    obtain
    \begin{align}
        (\bX_s-\bL_s)^\top \nabla f(\bL_s)
        &\leqslant f(\bX_h)-f(\bL_s)
        - \frac{m}{2}\|\bX_h-\bL_s\|_2^2
        - \sqrt{2}\,(\bW_h-\bW_s)^\top \nabla f(\bL_s)\\
        &= f(\bX_h) - f(\bL_h) 
        - \frac{m}{2}\|\bX_h-\bL_s\|_2^2 \\
       &\qquad\qquad+ f(\bL_h) - 
        f(\bL_s)- \sqrt{2}\,(\bW_h-\bW_s)^\top
       \nabla f(\bL_s).
    \end{align}
    Plugging this bound into \eqref{eq:aux-1c-1}, we get
    \begin{align}\label{eq:aux-1c-2}
        \|\bX_h-\bL_h\|_2^2
        \leqslant \|\bX-\bL_0\|_2^2
        &+ 2h \big(f(\bX_h) - f(\bL_h)\big) - m\int_0^h 
        \|\bX_h-\bL_s\|_2^2\,ds \\
        &+ 2\int_0^h\big\{f(\bL_h) - f(\bL_s) - \sqrt{2}
        (\bW_h-\bW_s)^\top \nabla f(\bL_s)\big\}\,ds.
    \end{align}
    Since the integral term in the last line is equal to 
    $R_2(h)$, rearranging the terms yields the second 
    claimed inequality.
\end{proof}

\begin{proof}[Proof of \Cref{lem:2}]
    We take expectations on both sides of 
    \eqref{eq:7b} for a random vector $\bY\sim \pi$. 
    Since for every $s\geqslant 0$, $\bL_s$ has the 
    same distribution as $\bY$, we get 
    $\mathbf E[\|\bX_h-\bL_h\|_2^2]\ge \wass_2^2(\bX_h,\bY)$
    and $\mathbf E[\|\bX_h-\bL_s\|_2^2]\ge \wass_2^2(\bX_h,\bY)$
    with $\bX_h = \bX+\sqrt{2}\,\bW_h$.  
    Furthermore, the expectation of $R_2(h)$ vanishes, and
    we get
    \begin{align}
        (1+mh)\wass_2^2(\bX+\sqrt{2}\,\bW_h,\bY)
        &\leqslant 
        \mathbf E[\|\bX -\bL_0\|_2^2] + 2h\mathbf E\big[f(\bX+\sqrt{2}\,\bW_h) - f(\bY)\big].\label{eq:9}
    \end{align}    
    This coincides with \eqref{cor:1.0}, the first claim 
    of the lemma. 
    
    For the second claim, we proceed in a similar way using 
    \eqref{eq:7} instead of \eqref{eq:7b}: we take the expected 
    value for a random vector $\bY\sim \pi$. This leads to
    \begin{align}
        (1+mh)\wass_2^2(\bX_h,\bY) \leqslant \mathbf E[\|\bX 
        &- \bL_0\|_2^2] + \int_0^h2\mathbf E\big(f(\bX_s) - 
        f(\bL_s)\big)\,ds\\ 
        &+ m\int_0^h \big\{ 
        \mathbf E[\|\bX_h - \bL_s\|_2^2] - \mathbf E[\|
        \bX_s - \bL_s\|_2^2]\big\}\,ds
        .\label{eq:9b}
    \end{align}
    Then, we use the fact that $\bX,\bL_s$ and $\bW_s$ are 
    orthogonal to $\bW_h - \bW_s$ in the sense of $\mathbb 
    L^2$, to obtain
    \begin{align}
        \mathbf{E}[\|\bX_h-\bL_s\|_2^2] &= 
        \mathbf{E}[\|\bX_s + \sqrt{2}\,(\bW_h -
        \bW_s)-\bL_s\|_2^2]
         = \mathbf{E}[\|\bX_s-\bL_s\|_2^2] + 2(h-s)p.
    \end{align}
    Combining this inequality with \eqref{eq:9b}, we get 
    the second claim of the lemma.
\end{proof}

\begin{proof}[Proof of \Cref{lem:basic-gd}]
    On the one hand, we have
    \begin{align}
        \|\btheta-h\nabla f(\btheta)-\by\|_2^2 
        = \|\btheta-\by\|_2^2
        -2h(\btheta-\by)^\top \nabla f(\btheta)
        + h^2\|\nabla f(\btheta)\|_2^2.
        \label{eq:basic-gd-1}
    \end{align}
    On the other hand, since $f$ is $m$-strongly 
    convex, we have $f(\by)\geqslant f(\btheta)
    + (\by-\btheta)^\top \nabla f(\btheta)
    + \frac{m}{2}\|\by-\btheta\|_2^2$. Rearranging
    the terms, we get
    \begin{align}
        -(\btheta-\by)^\top \nabla f(\btheta)
        \leqslant
        f(\by)-f(\btheta)
        - \frac{m}{2}\|\btheta-\by\|_2^2.
        \label{eq:basic-gd-2}
    \end{align}
    Combining \eqref{eq:basic-gd-1} and \eqref{eq:basic-gd-2}, we obtain
    \begin{align}
        \|\btheta-h\nabla f(\btheta)-\by\|_2^2
        &\leqslant
        \|\btheta-\by\|_2^2
        + 2h\big(f(\by)-f(\btheta)\big)
        - mh\|\btheta-\by\|_2^2
        + h^2\|\nabla f(\btheta)\|_2^2 \\
        &= (1-mh)\|\btheta-\by\|_2^2
        + 2h\big(f(\by)-f(\btheta)\big)
        + h^2\|\nabla f(\btheta)\|_2^2.
    \end{align}
    This proves \eqref{eq:basic-gd}.
\end{proof}

\begin{proof}[Proof of \Cref{lem:exp-upper}]
    We first prove \eqref{eq:exp-upper:a}. Let
    $\bzeta_0=\bxi$, and, for $j=1,\ldots,p$, let
    $\bzeta_j$ be the vector obtained from $\bxi$ by
    setting its first $j$ coordinates equal to zero.
    Thus $\bzeta_p=0$ and
    $\bzeta_{j-1}=\bzeta_j+\xi_j\bse_j$. By the definition
    of $M_j$, we have
    \begin{align}
        f(\btheta'+\sqrt{2s}\,\bzeta_{j-1})
        -f(\btheta'+\sqrt{2s}\,\bzeta_j)
        &\leqslant
        \sqrt{2s}\,\xi_j\,
        \partial_j f(\btheta'+\sqrt{2s}\,\bzeta_j)
        +sM_j\xi_j^2.
    \end{align}
    Summing over $j=1,\ldots,p$ gives
    \begin{align}
        f(\btheta'+\sqrt{2s}\,\bxi)-f(\btheta')
        &\leqslant
        \sqrt{2s}\sum_{j=1}^p
        \xi_j\,\partial_j f(\btheta'+\sqrt{2s}\,\bzeta_j)
        +s\sum_{j=1}^p M_j\xi_j^2.
    \end{align}
    Taking expectations and using that $\xi_j$ is 
    independent of $\bzeta_j$, with zero mean and 
    unit variance, yields $\mathbf E\big[ f(\btheta' 
    + \sqrt{2s}\,\bxi)-f(\btheta') \big] \leqslant 
    s\sum_{j=1}^p M_j = \Mav ps$. This proves 
    \eqref{eq:exp-upper:a}.

    We now prove \eqref{eq:exp-upper}. Since the gradient 
    of $f$ is $\Mmax$-Lipschitz, we have
    \begin{align}
        f(\btheta-h\nabla f(\btheta))
        &\leqslant
        f(\btheta)
        -h\|\nabla f(\btheta)\|_2^2
        +\frac{\Mmax h^2}{2}\|\nabla f(\btheta)\|_2^2 \\
        &=
        f(\btheta)
        -h\Big(1-\frac{h\Mmax}{2}\Big)
        \|\nabla f(\btheta)\|_2^2.
    \end{align}
    Multiplying both sides by $2$ gives
    $2\big[f(\btheta-h\nabla f(\btheta))-f(\btheta)\big]
        \leqslant -h(2-h\Mmax)\|\nabla f(\btheta)\|_2^2$.
    This proves \eqref{eq:exp-upper}.
\end{proof}

\subsection{Proof of the error bound for variable step
size stated in \Cref{prop:VSS}}

Let us define the normalized error $
r_k:=\frac{m}{p}\,\wass_2^2(\nu_k^{\sf LMC},\pi)$, 
$k\in\mathbb N$. Since $h_k\leqslant 1/\Mmax$ for 
every $k$, claim \eqref{claim1} of \Cref{th:1} holds
with a non-positive last term. Dropping this term, 
we get
\begin{align}\label{eq:rec-vss}
    r_{k+1}\leqslant
    \frac{1-mh_k}{1+mh_k}\,r_k
    +\frac{(\kappav+1)(mh_k)^2}{1+mh_k},
    \qquad k\in\mathbb N.
\end{align}
We first consider the initial phase $k\leqslant k_0$.
For such $k$, the definition $h_k$ presented in 
\Cref{prop:VSS} simplifies to $h_k=1/\Mmax$, so 
\eqref{claim3} yields
\begin{align}
r_{k_0}\leqslant
 e^{-2k_0/\kappamax}\varepsilon_0^2
 +\frac{\kappav+1}{2\kappamax}.
\end{align}
By the definition of $k_0$, we have
$e^{-2k_0/\kappamax}\varepsilon_0^2
\leqslant \frac{\kappav+1}{2\kappamax}$,
and thus $r_{k_0}\leqslant \frac{\kappav+1}{
\kappamax}$. We now prove by induction that, 
for every $k\geqslant k_0$,
\begin{align}\label{eq:ind-vss}
    r_k\leqslant
    \frac{\kappav+1}{\kappamax+k-k_0}.
\end{align}
The case $k=k_0$ follows directly from
what precedes. Assume next that
\eqref{eq:ind-vss} holds for some $k\geqslant k_0$.
Since $mh_k = {1}/{(\kappamax+k-k_0)}$,
the recursion \eqref{eq:rec-vss} gives
\begin{align}
    r_{k+1}
    &\leqslant
    \frac{\kappamax+k-k_0-1}{\kappamax+k-k_0+1}\,r_k
    +\frac{\kappav+1}
    {(\kappamax+k-k_0)(\kappamax+k-k_0+1)}
    \\
    &\leqslant
    \frac{\kappamax+k-k_0-1}{\kappamax+k-k_0+1}\,
    \frac{\kappav+1}{\kappamax+k-k_0}
    +\frac{\kappav+1}
    {(\kappamax+k-k_0)(\kappamax+k-k_0+1)}
    \\
    &=\frac{\kappav+1}{\kappamax+k-k_0+1}.
\end{align}
This proves \eqref{eq:ind-vss}, which is equivalent to
\eqref{W2VS}.

Finally, if we want
$\wass_2^2(\nu_k^{\sf LMC},\pi)\leqslant \varepsilon^2 p/m$,
it is enough, by \eqref{W2VS}, to ensure that
$\frac{\kappav+1}{\kappamax+k-k_0}\leqslant \varepsilon^2$. 
Solving this inequality for $k$ gives \eqref{kVS}.

\subsection{Proof of the error bound of the LMC when
the Laplacian is Lipschitz continuous}

\begin{proof}[Proof of Theorem~\ref{th:3}]
Let $\bvartheta\sim \nu$ be a random vector in $\mathbb 
R^p$ and $\bY\sim\pi$ be a random vector optimally 
coupled with $\bvartheta$. Let $\bvartheta^+ = 
\bvartheta - h\nabla f(\bvartheta) + \sqrt{2}\,\bW_h$, 
where $\bW$ is a Brownian motion in $\mathbb R^p$ 
independent of $\bvartheta$ and $\bY$. We define the
Langevin process $(\bL_t)_{t\geqslant 0}$ by 
\begin{align}
    d\bL_t = -\nabla f(\bL_t)\,dt + \sqrt{2}\,
    d\bW_t,\qquad\bL_0 = \bY.
\end{align}
Since $\bL_0\sim\pi$, the process $(\bL_t)_{t
\geqslant 0}$ is stationary and has $\pi$ as its 
invariant density, that is $\bL_t\sim \pi$, for every 
$t\geqslant 0$. Setting $\boldsymbol{\Psi}
(\bvartheta,\bL_0) = \bvartheta - h\nabla f
(\bvartheta) - \bL_0 + h\nabla f(\bL_0)$, we get
\begin{align}
    \|\bvartheta^+ - \bL_h\|_{\mathbb L_2}^2
    & = 
    \Big\|\bvartheta - h\nabla f(\bvartheta) - 
    \bL_0 + \int_0^h \nabla f(\bL_t)\,dt
    \Big\|_{\mathbb L_2}^2\\
    &= \|\bvartheta - h\nabla f(\bvartheta) - 
    \bL_0
    +h\nabla f(\bL_0)+ \int_0^h \{\nabla f(\bL_t) 
    - \nabla f(\bL_0)\}\,dt\|_{\mathbb L_2}^2\\
    &=
    \|\boldsymbol{\Psi}(\bvartheta,\bL_0)\|_{
    \mathbb L_2}^2
    +\Big\|\int_0^h \{\nabla f(\bL_t) - \nabla f
    (\bL_0)\}\,dt\Big\|_{\mathbb L_2}^2 + 2\int_0^h
    A_t\,dt\label{eq:4:3}
\end{align}
where
\begin{align}
    A_t & = \mathbf E\Big[\Big(\bvartheta - h\nabla f 
    (\bvartheta) - \bL_0 + h\nabla f(\bL_0)\Big)^\top 
    \Big(\nabla f(\bL_t) - \nabla f(\bL_0)\Big)\Big].
\end{align}
Since $f$ is $m$-strongly convex and has $\Mmax$-Lipschitz
gradient, the strengthened cocoercivity inequality implies,
provided $(m+\Mmax)h\leqslant 2$, that
\begin{align}\label{eq:4.6}
    \big\|\boldsymbol{\Psi}(\bvartheta,\bL_0)
    \big\|_{\mathbb L_2} \leqslant (1-mh)\|\bvartheta 
    - \bL_0\|_{\mathbb L_2}
    =(1-mh)\wass_2(\nu,\pi).
\end{align}
Furthermore, the It\^o formula yields
\begin{align}
    \nabla f(\bL_t) - \nabla f(\bL_0) 
    &=  \int_0^t \nabla^2 f(\bL_s)\,d\bL_s 
    + \int_0^t \Delta \nabla f(\bL_s)\,ds \\
    &= \int_0^t\big\{\nabla \Delta  f - \nabla^2 
    f \nabla f\big\}(\bL_s)\,ds +
    \sqrt{2} \int_0^t \nabla^2 f(\bL_s) \,d\bW_s.
    \label{eq:4:7}
\end{align}
Therefore, combining this decomposition with 
the triangle inequality, and using the stationarity of 
the process $(\bL_t)$, we get
\begin{align}
    \Big\|\int_0^h \{\nabla f(\bL_t) &- \nabla f(\bL_0)\}\,dt
    \Big\|_{\mathbb L_2}  \leqslant \int_0^h \big\|\nabla f
    (\bL_t) - \nabla f(\bL_0)\big\|_{\mathbb L_2}\,dt\\
    & \leqslant \frac{h^2}{2}
    \|\nabla^2 f  \nabla f\|_{\mathbb L_2(\pi)}
    + \frac{2\sqrt{2}}{3}\,h^{3/2} \|\|\nabla^2 f
    \|_{\mathsf F}\|_{\mathbb L_2(\pi)} + \frac{h^2}{2} \|
    \nabla \Delta f\|_{\mathbb L_2(\pi)}\\
    &\leqslant \frac{h^2\Mmax}{2} \|\nabla f\|_{\mathbb
    L_2(\pi)} + \frac{2\sqrt{2}}{3}\,h^{3/2} (\Mmax \|
    \Delta f \|_{\mathbb L_1(\pi)})^{1/2} + \frac{h^2}{2}
    \sqrt{p}\,M_\Delta\\
    &\leqslant \frac{h^2\Mmax}{2} \sqrt{p \Mav} + \frac{2 
    \sqrt{2}}{3}\,h^{3/2} \sqrt{p\Mmax \Mav} + \frac{
    h^2}{2} \sqrt{p}\,M_\Delta\\
    &\leqslant \frac12 h^2  M_\Delta
    \sqrt{p} + \frac{3}{2}\,h^{3/2} \sqrt{\Mmax
    \Mav p}.\label{eq:4:5}
\end{align}
Here, the third line follows from the fact that the 
largest eigenvalue of the Hessian $\nabla^2 f$ at any 
point is bounded by $\Mmax$, the fourth inequality follows 
from \Cref{lem:7} below, and the last inequality is
a consequence of the condition $h\Mmax\leqslant 1$.

\begin{lemma}\label{lem:7}
    Let $f\in C^2(\mathbb R^p)$ be a convex function, and
    let $\Delta f$ denote its Laplacian. Set
    $M_{1,\Delta} = (1/p)\sup_{\btheta} \Delta f(\btheta)$.
    Then $\|\nabla f\|_{\mathbb L_2(\pi)}^2 = \|\Delta f
    \|_{\mathbb L_1(\pi)}\leqslant pM_{1,\Delta} \leqslant
    p\Mav$.
\end{lemma}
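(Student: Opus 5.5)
The identity $\|\nabla f\|_{\mathbb L_2(\pi)}^2=\mathbf E_\pi[\Delta f]$ comes from Gaussian-type integration by parts against $\pi\propto e^{-f}$. The two inequalities then follow from convexity, which makes $\Delta f\geqslant 0$, and from comparing $\Delta f$ with the coordinate-wise constants $M_j$.

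\emph{Identity.} For each $j$ we have $\partial_j(e^{-f})=-\partial_j f\,e^{-f}$, so formally
\[
\int (\partial_j f)^2 e^{-f}=-\int \partial_j f\,\partial_j(e^{-f})=\int \partial_{jj}f\,e^{-f}.
\]
Summing over $j$ gives $\mathbf E_\pi\|\nabla f\|_2^2=\mathbf E_\pi[\Delta f]$. Convexity gives $\Delta f=\tr\nabla^2 f\geqslant 0$, so $\mathbf E_\pi[\Delta f]=\|\Delta f\|_{\mathbb L_1(\pi)}$.

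The only real obstacle is justifying the integration by parts, that is, showing the boundary terms vanish. I would integrate over the ball $B_R$ and apply the divergence theorem to the vector field $\nabla f\,e^{-f}$:
\[
\int_{B_R}\big(\Delta f-\|\nabla f\|_2^2\big)e^{-f}=\int_{\partial B_R}\partial_n f\,e^{-f}\,d\sigma .
\]
When $f$ is strongly convex with Lipschitz gradient, as in the setting of \Cref{th:3}, two facts make the boundary term tend to zero as $R\to\infty$. First, $e^{-f}$ decays like a Gaussian. Second, $|\partial_n f|$ and the surface area $|\partial B_R|$ grow only polynomially in $R$. Monotone convergence then handles the left side, since $\|\nabla f\|_2^2 e^{-f}\geqslant 0$ and $\Delta f\,e^{-f}\geqslant 0$. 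For a merely convex $f$ one only needs $e^{-f}$ integrable. Convexity and integrability give exponential decay $f(\btheta)\geqslant a\|\btheta\|_2-b$. Convexity also controls $\|\nabla f\|$ on spheres by the growth of $f$, so the boundary flux over spheres can be made to vanish along a sequence $R_k\to\infty$. Alternatively, one can truncate with a smooth cutoff $\chi_R$ and handle the resulting term $\int \partial_j f\,\partial_j\chi_R\,e^{-f}$ by Cauchy--Schwarz.

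\emph{First inequality.} Bound $\mathbf E_\pi[\Delta f]\leqslant\sup_{\btheta}\Delta f(\btheta)=pM_{1,\Delta}$.

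\emph{Second inequality.} For each $j$ and $\btheta$, a Taylor expansion of $a\mapsto f(\btheta+a\bse_j)$ gives
\[
2\big(f(\btheta+a\bse_j)-f(\btheta)-a\,\partial_j f(\btheta)\big)/a^2\to\partial_{jj}f(\btheta)\quad\text{as } a\to 0 .
\]
Since $M_j$ is the supremum of this quotient over all $a$ and $\btheta$, we get $\partial_{jj}f(\btheta)\leqslant M_j$. Summing over $j$ gives $\Delta f(\btheta)\leqslant\sum_j M_j=p\Mav$, hence $M_{1,\Delta}\leqslant\Mav$.
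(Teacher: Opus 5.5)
Your proposal is correct and follows the paper's route. Integration by parts against $\pi\propto e^{-f}$ gives $\mathbf E_\pi\|\nabla f\|_2^2=\mathbf E_\pi[\Delta f]$, and the two inequalities follow from $\Delta f\geqslant 0$, $\mathbf E_\pi[\Delta f]\leqslant\sup\Delta f$ and $\partial_{jj}f\leqslant M_j$, which is what the paper means by ``follows from the definitions.''

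The paper does not justify the boundary terms at all, so your truncation argument only adds rigor. It is complete in the strongly convex, Lipschitz-gradient setting where the lemma is used. For merely convex $f$, your remark about ``controlling $\|\nabla f\|$ on spheres'' does not by itself make the flux vanish. A cleaner way to get your subsequence is to note that the flux through $\partial B_R$ equals $-R^{p-1}\phi'(R)$ with $\phi(R)=\int_{S^{p-1}}e^{-f(R\bu)}\,d\sigma(\bu)$. This quantity is eventually nonnegative and integrable in $R$, so its $\liminf$ is zero.
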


\begin{proof}
The integration by
parts formula yields $\|\nabla f\|_{\mathbb L_2(\pi)}^2
     = \int \nabla f^\top \nabla (- \pi) = \int \Delta f\,
     \pi = \|\Delta f\|_{\mathbb L_1(\pi)}$. The rest of 
     the lemma follows from the definitions of $M_{1,
     \Delta}$ and $\Mav$.
\end{proof}
Finally, we bound $A_t$. 
The stochastic integral has zero conditional expectation given
$(\bvartheta,\bL_0)$, and hence its scalar product with
$\boldsymbol{\Psi}(\bvartheta,\bL_0)$
has expectation zero. This leads to
\begin{align}
    A_t &= \mathbf E\Big[\boldsymbol{\Psi}(\bvartheta,
    \bL_0)^\top 
    \Big(\nabla f(\bL_t) - \nabla f(\bL_0) - \sqrt{2}
    \int_0^t \nabla^2 f(\bL_s)\,d\bW_s\Big)\Big]\\
    &\leqslant \Big\|\boldsymbol{\Psi}(\bvartheta,\bL_0)\Big\|_{
    \mathbb L_2}\Big\|\nabla f(\bL_t) - \nabla f(\bL_0) 
    - \sqrt{2}\int_0^t \nabla^2 f(\bL_s)\,d\bW_s
    \Big\|_{\mathbb L_2}\\
    &\leqslant (1-mh)\wass_2(\nu,\pi) \Big(\int_0^t \|\nabla^2 
    f\nabla f\|_{\mathbb L_2(\pi)} + \|\nabla \Delta f\|_{
    \mathbb L_2(\pi)}\,ds\Big)\\
    &\leqslant t(1-mh) \wass_2(\nu,\pi) (\Mmax \sqrt{\Mav} + 
    M_\Delta)\sqrt{p}.\label{eq:4:8}
\end{align}
Here, in the second line, we used the Cauchy-Schwarz 
inequality, in the third line we used inequality 
\eqref{eq:4.6} and decomposition \eqref{eq:4:7}, whereas
in the last line we used the definitions of $\Mmax$ and
$M_\Delta$ combined with \Cref{lem:7}. 

We have now found upper bounds on all the three terms
appearing in the right-hand side of \eqref{eq:4:3}. 
Indeed, combining \eqref{eq:4.6}, \eqref{eq:4:5} and 
\eqref{eq:4:8}, we arrive at
\begin{align}
    \|\bvartheta^+ - \bL_h\|_{\mathbb L_2}^2 
    \leqslant (1-mh)^2\wass_2^2(\nu,\pi) &+ \Big(\frac32 h^{3/2} 
    \sqrt{\Mmax \Mav} + \frac{h^2}{2}M_\Delta\Big)^2p \\
    &
    + h^2(1-mh)\wass_2(\nu,\pi) (\Mmax \sqrt{\Mav} + 
    M_\Delta)\sqrt{p}.
\end{align}
Since $\bL_h\sim\pi$, the left-hand side controls
$\wass_2^2(\nu^+,\pi)$. 
Finally, using \cite[Lemma 7]{dalalyan2019user} 
with $A = mh$, $C = \tfrac12 h^2(\Mmax \sqrt{\Mav}  + 
M_\Delta) \sqrt{p}$ and $B =   \frac12 h^2  M_\Delta
\sqrt{p} + \frac{3}{2}\,h^{3/2} \sqrt{\Mmax
\Mav p}$,
we get
\begin{align}
    \wass_2(\nu_k^{\textsf{LMC}},\pi) &\leqslant 
    (1-mh)^k \wass_2(\nu_0,\pi) + \frac{(2\Mmax 
    \sqrt{\Mav}  + M_\Delta)h\sqrt{p}}{m}.
\end{align}
This completes the proof of the theorem. 
\end{proof}

\subsection{Proof of the error bound of the SGLD}

We prove \Cref{th:sgld} in several steps. Since some 
of these intermediate estimates may be of independent 
interest, we collect them in the following proposition. 
Its last claim is precisely the assertion of 
\Cref{th:sgld}.

\begin{proposition}\label{prop:th:sgld}
    Let $f:\mathbb R^p\to\mathbb R$ be $n\mbar$-strongly 
    convex with $n\Mmaxbar$-Lipschitz gradient, and 
    assume that the average coordinate-wise smoothness 
    constant of $f$ is $n\Mavbar$. Assume also that
    $f=\sum_{i=1}^n f_i$, with $L_i = \|\nabla f_i\|_{
    \rm Lip}<\infty$. Let $\bvartheta$ be an $\mathbb 
    R^p$-valued random vector, let $\bxi\sim\mathcal N(
    0,\mathbf I_p)$, and let $I\sim\mathrm{Unif}\{1, 
    \ldots,n\}$ be independent.
    Define $\nu$ and $\nu^+$ as the laws of $\bvartheta$
    and $\bvartheta^+  = \bvartheta - hn\big(\nabla 
    f_I(\bvartheta) - \nabla f_I(\btheta_*)\big) + 
    \sqrt{2h}\,\bxi$, respectively. Then, for every 
    $h\geqslant 0$, 
    \begin{align}
        \wass_2^2(\nu^+,\pi)
        &\leqslant
        \frac{1-n\mbar h}{1+n\mbar h}\wass_2^2(\nu,\pi)
        + \frac{n(\Mavbar+\mbar)h^2p}{1+n\mbar h}      
        + \frac{h^2(hn\Mmaxbar-2)}{1+n\mbar h}
        \|\nabla f(\bvartheta)\|_{\mathbb L_2}^2        \\
        &\qquad
        + nh^2\sum_{i=1}^n
        \big\|\nabla f_i(\bvartheta)
        -\nabla f_i(\btheta_*)\big\|_{\mathbb L_2}^2 .
        \label{claim-sgld-1}
    \end{align}
    If, in addition, $hn\Mmaxbar\leqslant 2$, then
    \begin{align}
        \wass_2^2(\nu^+,\pi)
        &\leqslant
        \Big(
        \frac{1-n\mbar h}{1+n\mbar h}
        +2(n\Mtwo h)^2\Big)\,
        \wass_2^2(\nu,\pi)    
        + \Big(\frac{\Mavbar+\mbar}{1+n\mbar h}
        + \frac{2\Mtwo^2}{\mbar}\Big)nh^2p.
        \label{claim-sgld-2}
    \end{align}
    Consequently, if SGLD is run with constant step-size
    $h$ satisfying $h \leqslant \frac{\mbar}{3n\Mtwo^2}$,
    then
    \begin{align}
        \wass_2^2(\nu_k^{\sf SGLD},\pi)
        \leqslant
        (1+n\mbar h)^{-k}\,
        \wass_2^2(\nu_0^{\sf SGLD},\pi)
        + \Big(\Mavbar+2\mbar
        +\frac{2\Mtwo^2}{\mbar}\Big)\frac{hp}{\mbar}.
        \label{claim-sgld-3}
    \end{align}
\end{proposition}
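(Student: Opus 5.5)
The plan is to mimic the proof of \Cref{th:1}, treating the stochastic gradient as the exact gradient plus a conditionally centred perturbation. Write $\bs g = n(\nabla f_I(\bvartheta)-\nabla f_I(\btheta_*))$, so that $\mathbf E[\bs g\mid\bvartheta]=\nabla f(\bvartheta)$. Let $\bY\sim\pi$ be optimally coupled with $\bvartheta$, and let $\bW$ be a Brownian motion independent of $(\bvartheta,\bY,I)$.

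\textbf{Step 1: claim \eqref{claim-sgld-1}.} Apply \eqref{cor:1.1} of \Cref{lem:2} with $\bX=\bvartheta-h\bs g$. This gives
\[
(1+n\mbar h)\wass_2^2(\nu^+,\pi)\leqslant\|\bX-\bY\|_{\mathbb L_2}^2+2\int_0^h\mathbf E[f(\bX_s)-f(\bY)]\,ds+n\mbar h^2p .
\]
Conditioning on $(\bvartheta,\bY)$ splits the first term as
\[
\|\bvartheta-h\nabla f(\bvartheta)-\bY\|^2_{\mathbb L_2}+h^2\,\mathbf E\|\bs g-\nabla f(\bvartheta)\|_2^2 .
\]
The variance term is at most $h^2\mathbf E\|\bs g\|^2=nh^2\sum_i\|\nabla f_i(\bvartheta)-\nabla f_i(\btheta_*)\|^2_{\mathbb L_2}$.

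For the exact-gradient part I use \Cref{lem:basic-gd}. For the $f$-terms I use \eqref{eq:exp-upper:a} of \Cref{lem:exp-upper} with $\btheta'=\bX$, which contributes $n\Mavbar h^2p$ after integration. It remains to bound $2\mathbf E[f(\bvartheta-h\bs g)-f(\bvartheta)]$. By the descent inequality this is at most
\[
-2h\|\nabla f(\bvartheta)\|^2+n\Mmaxbar h^2\,\mathbf E\|\bs g\|^2 .
\]
Here a bookkeeping choice matters: $\mathbf E\|\bs g\|^2$ equals $\|\nabla f\|^2$ plus the variance. I expect the coefficient $h^2(hn\Mmaxbar-2)$ in the statement to come from combining $+h^2\|\nabla f\|^2$ (from \Cref{lem:basic-gd}) with the $-2h\cdot h$ drift term. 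I will arrange the terms, possibly using the cruder bound via $\mathbf E\|\bs g\|^2$, so that the stated form emerges. Matching these coefficients exactly (together with the extra factor multiplying the variance term) is the delicate step. If the Lipschitz-in-$\bs g$ cross term is awkward, I would instead apply smoothness of $f$ along $-h\bs g$ conditionally on $I$, then average over $I$.

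\textbf{Step 2: claim \eqref{claim-sgld-2}.} Drop the gradient term using $hn\Mmaxbar\leqslant2$. Then bound
\[
\|\nabla f_i(\bvartheta)-\nabla f_i(\btheta_*)\|\leqslant L_i\|\bvartheta-\btheta_*\|,
\]
so the variance term is at most $(n\Mtwo h)^2\|\bvartheta-\btheta_*\|^2_{\mathbb L_2}$. Next, by the triangle inequality,
\[
\|\bvartheta-\btheta_*\|^2_{\mathbb L_2}\leqslant 2\wass_2^2(\nu,\pi)+2\,\mathbf E_\pi\|\bY-\btheta_*\|^2 .
\]
The standard strong-convexity estimate gives $\mathbf E_\pi\|\bY-\btheta_*\|^2\leqslant p/(n\mbar)$. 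This yields the factor $2(n\Mtwo h)^2$ on $\wass_2^2(\nu,\pi)$ and the additive term $2\Mtwo^2nh^2p/\mbar$.

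\textbf{Step 3: claim \eqref{claim-sgld-3}.} The condition $h\leqslant\mbar/(3n\Mtwo^2)$ implies $hn\Mmaxbar\leqslant 2$, since $\Mmaxbar\leqslant\Mtwo$ and $\mbar\leqslant\Mtwo$. I will check that it also gives
\[
\frac{1-x}{1+x}+2(n\Mtwo h)^2\leqslant\frac1{1+x},\qquad x=n\mbar h .
\]
This reduces to $2(n\Mtwo h)^2(1+x)\leqslant x$, which follows from $n\Mtwo^2h\leqslant\mbar/3$ and $x\leqslant1/3$. Unrolling the recursion then gives the geometric factor $(1+n\mbar h)^{-k}$. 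The additive term is at most $(\Mavbar+\mbar+2\Mtwo^2/\mbar)nh^2p$, and summing the series $\sum_j(1+x)^{-j}\leqslant(1+x)/x$ produces the stated constant $(\Mavbar+2\mbar+2\Mtwo^2/\mbar)hp/\mbar$, after absorbing the $(1+x)$ factor into the extra $\mbar$.

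I expect the main obstacle to be the coefficient tracking in Step 1, not any conceptual difficulty.
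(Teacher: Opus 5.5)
Your Steps 2 and 3 are the paper's argument and are correct once \eqref{claim-sgld-1} is available. Step 1, however, has a genuine gap: the route you chose cannot deliver \eqref{claim-sgld-1}, and the mismatch is not a matter of bookkeeping.

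Write $m=n\mbar$, $\Mmax=n\Mmaxbar$, $\Mav=n\Mavbar$, $G=\|\nabla f(\bvartheta)\|_{\mathbb L_2}^2$ and $S=\mathbf E\|\bs g\|_2^2$. Then the last term of \eqref{claim-sgld-1} is $h^2S$. Apply \eqref{cor:1.1} with $\bX=\bvartheta-h\bs g$, then \Cref{lem:basic-gd}, \eqref{eq:exp-upper:a} and the descent inequality along $-h\bs g$. Even keeping the exact variance $S-G$ rather than bounding it by $S$, you obtain at best
\[
\wass_2^2(\nu^+,\pi)\leqslant\frac{1-mh}{1+mh}\,\wass_2^2(\nu,\pi)+\frac{(\Mav+m)h^2p}{1+mh}-\frac{2h^2}{1+mh}\,G+\frac{h^2(1+h\Mmax)}{1+mh}\,S .
\]
The right-hand side minus that of \eqref{claim-sgld-1} equals $\frac{h^3}{1+mh}\{\Mmax(S-G)-mS\}$. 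This is positive as soon as the gradient-noise variance $S-G$ exceeds $(m/\Mmax)S$, which is the generic situation.

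The culprit is the smoothness inequality applied along the random direction: it costs $\frac{\Mmax h^2}{2}S$ instead of $\frac{\Mmax h^2}{2}G$. This cost is intrinsic to placing $\bs g$ inside $\bX$. By Jensen, $\mathbf E[f(\bvartheta-h\bs g)\mid\bvartheta]\geqslant f(\bvartheta-h\nabla f(\bvartheta))$. For a quadratic $f$ whose top-curvature direction carries all the noise, the gap is exactly $\frac{\Mmax h^2}{2}(S-G)$, so no regrouping of terms removes it.

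The defect propagates to Step 2. The factor in front of $\wass_2^2(\nu,\pi)$ becomes $\frac{1-mh}{1+mh}+2(n\Mtwo h)^2\frac{1+h\Mmax}{1+mh}$, which is larger than the one in \eqref{claim-sgld-2}. Separately, bounding the variance by $S$, as you propose, discards the $-h^2G$ that the coefficient $h^2(h\Mmax-2)$ requires.

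The missing idea is to add the gradient noise after an exact LMC step, rather than before the Gaussian smoothing. Set $\bar\bvartheta^+=\bvartheta-h\nabla f(\bvartheta)+\sqrt{2h}\,\bxi$; its law $\bar\nu^+$ is controlled by \eqref{claim1} of \Cref{th:1}. Couple $\bY\sim\pi$ optimally with $\bar\bvartheta^+$, conditionally independently of $I$ given $(\bvartheta,\bxi)$. Since $\bvartheta^+=\bar\bvartheta^+-h(\bs g-\nabla f(\bvartheta))$ and $\mathbf E[\bs g\mid\bvartheta,\bxi,\bY]=\nabla f(\bvartheta)$, the cross term vanishes, and
\[
\wass_2^2(\nu^+,\pi)\leqslant\wass_2^2(\bar\nu^+,\pi)+h^2(S-G).
\]
Insert \eqref{claim1} and use $\frac{h^2(h\Mmax-1)}{1+mh}-h^2\leqslant\frac{h^2(h\Mmax-2)}{1+mh}$; this gives \eqref{claim-sgld-1} exactly. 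The smoothness inequality is now used only along the deterministic direction $-h\nabla f(\bvartheta)$, so no $\Mmax h^3(S-G)$ term appears. With \eqref{claim-sgld-1} in hand, your Steps 2 and 3 go through as written.

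A smaller point: even in your own Step 1, the conditional split of $\|\bX-\bY\|_{\mathbb L_2}^2$ needs the optimal coupling of $\bY$ with $\bvartheta$ to be chosen independently of $I$. You should state this explicitly.
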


\begin{proof}
The proof follows the same lines as that of
\Cref{th:1}. We have of course $m = n\mbar$, $\Mmax 
= n\Mmaxbar$ and $\Mav = n\Mavbar$. Let us define
\begin{align}
    \widehat{\nabla f}_I(\btheta)
    :=
    n\big(\nabla f_I(\btheta)-\nabla f_I(\btheta_*)\big).
\end{align}
Since $\nabla f(\btheta_*)=0$, we have
\begin{align}
    \mathbf E[\widehat{\nabla f}_I(\btheta)]
    &=
    n\bigg(\frac1n\sum_{i=1}^n \nabla f_i(\btheta)
    - \frac1n\sum_{i=1}^n \nabla f_i(\btheta_*)\bigg)
    =
    \nabla f(\btheta).
\end{align}
Let $\bar\bvartheta^+ = \bvartheta-h\nabla f(\bvartheta) 
+ \sqrt{2h}\,\bxi$ be the conditional expectation of
$\bvartheta^+$ with respect to $I$, given $(\bvartheta, 
\bxi)$. Let $\bY\sim\pi$ be optimally coupled with
$\bar\bvartheta^+$.
We choose this coupling independently of $I$, conditionally
on $(\bvartheta,\bxi)$. Then
\begin{align}
    \wass_2^2(\nu^+,\pi)
    &\leqslant
    \|\bvartheta^+-\bY\|_{\mathbb L_2}^2 =
    \|\bar\bvartheta^+-\bY\|_{\mathbb L_2}^2
    + h^2
    \|\widehat{\nabla f}_I(\bvartheta)
    -\nabla f(\bvartheta)\|_{\mathbb L_2}^2,
\end{align}
because the cross term has zero expectation. On the one
hand,
\begin{align}
    \|\widehat{\nabla f}_I(\bvartheta)-\nabla f
    (\bvartheta)\|_{\mathbb L_2}^2
    =
    \|\widehat{\nabla f}_I(\bvartheta)\|_{\mathbb L_2}^2
    -
    \|\nabla f(\bvartheta)\|_{\mathbb L_2}^2.
\end{align}
On the other hand, applying \eqref{eq:2:9}  gives
\begin{align}
    \|\bar\bvartheta^+-\bY\|_{\mathbb L_2}^2
    &\leqslant
    \frac{1- m h}{1+ m h}\,
    \wass_2^2(\nu,\pi)
    + \frac{(\Mav+ m)h^2p}{1+ m h}  
    + \frac{h^2(h\Mmax-1)}{1+m h}
    \|\nabla f(\bvartheta)\|_{\mathbb L_2}^2.
\end{align}
Combining the last three displays, we obtain
\begin{align}
    \wass_2^2(\nu^+,\pi)
    &\leqslant
    \frac{1-m h}{1+m h}\,
    \wass_2^2(\nu,\pi)
    + \frac{(\Mav+m)h^2p}{1+m  h} 
    + \frac{h^2(h\Mmax-2)}{1+ m h}
    \|\nabla f(\bvartheta)\|_{\mathbb L_2}^2\\
    &\qquad\qquad\qquad\qquad + h^2
    \|\widehat{\nabla f}_I(\bvartheta)\|_{\mathbb L_2}^2.
    \label{eq:3:7}
\end{align}
This proves \eqref{claim-sgld-1}, since
$\|\widehat{\nabla f}_I(\bvartheta)\|_{\mathbb L_2}^2 =
n\sum_{i=1}^n \|\nabla f_i(\bvartheta) - \nabla 
f_i(\btheta_*)\|_{\mathbb L_2}^2$.

Assume now that $hn\Mmaxbar\leqslant 2$. Then the third
term on the right-hand side of \eqref{eq:3:7} is
nonpositive and can be dropped. Since each $f_i$ has
$L_i$-Lipschitz gradient, we have $\|\nabla f_i 
(\bvartheta)-\nabla f_i(\btheta_*)\|_{\mathbb L_2}
\leqslant L_i\|\bvartheta-\btheta_*\|_{\mathbb L_2}$. 
Therefore,
\begin{align}
    \|\widehat{\nabla f}_I(\bvartheta)\|_{\mathbb L_2}^2
    &= n\sum_{i=1}^n
    \|\nabla f_i(\bvartheta)
    - \nabla f_i(\btheta_*)\|_{\mathbb L_2}^2 \leqslant
    n^2\Mtwo^2
    \|\bvartheta-\btheta_*\|_{\mathbb L_2}^2.
\end{align}
Let $\bY\sim\pi$ be optimally coupled with $\bvartheta$.
Then
\begin{align}
    \|\bvartheta-\btheta_*\|_{\mathbb L_2}^2
    &\leqslant
    2\|\bvartheta-\bY\|_{\mathbb L_2}^2
    +2\|\bY-\btheta_*\|_{\mathbb L_2}^2 =
    2\wass_2^2(\nu,\pi)
    +2\|\bY-\btheta_*\|_{\mathbb L_2}^2.
\end{align}
Since $\bY\sim\pi\propto e^{-f}$ and
$\nabla f(\btheta_*)=0$, integration by parts gives
$\mathbf E\big[(\bY-\btheta_*)^\top\nabla f(\bY)
\big] = p$. By strong convexity with parameter $m$,
\begin{align}
    m\|\bY-\btheta_*\|_2^2  &\leqslant
    (\bY-\btheta_*)^\top \big(\nabla f(\bY)-\nabla f(\btheta_*)\big) =  (\bY-\btheta_*)^\top 
    \nabla f(\bY).
\end{align}
Taking expectations yields $\|\bY-\btheta_* 
\|_{\mathbb L_2}^2 \leqslant \frac{p}{m}$.
Consequently,
\begin{align}
    nh^2\sum_{i=1}^n
    \|\nabla f_i(\bvartheta)
    -\nabla f_i(\btheta_*)\|_{\mathbb L_2}^2
    \leqslant
    2n^2h^2\Mtwo^2\wass_2^2(\nu,\pi)
    +\frac{2n^2h^2\Mtwo^2p}{m}.
\end{align}
Substituting this into \eqref{claim-sgld-1} gives
\eqref{claim-sgld-2}.

Finally, assume that $ h\leqslant \frac{\mbar}{3n
\Mtwo^2}$. Since $\mbar\leqslant \Mmaxbar\leqslant
\Mtwo$, we have
\begin{align}
    \frac{1-m h}{1+m h}
    +2n^2\Mtwo^2h^2 \leqslant
    \frac{1}{1+n\mbar h},\quad\text{and}\quad
    \frac{2\Mtwo^2}{\mbar}
    \leqslant
    \frac{1}{1+n\mbar h}
    \left(\mbar+\frac{2\Mtwo^2}{\mbar}\right).
\end{align}
Hence \eqref{claim-sgld-2} implies
\begin{align}
    \wass_2^2(\nu_{k+1}^{\sf SGLD},\pi)
    &\leqslant
    \frac{1}{1+n\mbar h}\,
    \wass_2^2(\nu_k^{\sf SGLD},\pi)  
    + \frac{n(\Mavbar+2\mbar)h^2p}{1+n\mbar h}
    + \frac{2n\Mtwo^2h^2p}
    {\mbar(1+n\mbar h)}.
\end{align}
Iterating this recursion, we get
\begin{align}
    \wass_2^2(\nu_k^{\sf SGLD},\pi)
    &\leqslant
    \frac{\wass_2^2(\nu_0^{\sf SGLD},\pi)}
    {(1+n\mbar h)^k} 
    + \frac{1-(1+n\mbar h)^{-k}}{n\mbar h}
    \Big(\Mavbar+2\mbar+\frac{2\Mtwo^2}{\mbar}\Big)
    nh^2p.
\end{align}
Since $1-(1+n\mbar h)^{-k}\leqslant 1$, this yields
\begin{align}
    \wass_2^2(\nu_k^{\sf SGLD},\pi)
    \leqslant
    (1+n\mbar h)^{-k}
    \wass_2^2(\nu_0^{\sf SGLD},\pi)
    + \Big\{\Mavbar+2\mbar+\frac{2\Mtwo^2}{\mbar}\Big\}
    \frac{hp}{\mbar}.
\end{align}
This proves \eqref{claim-sgld-3}.
\end{proof}

\begin{proof}[Proof of \Cref{th:sgld}]
The condition $h\leqslant \mbar/(3n\Mtwo^2)$ implies
$h\leqslant 2/(n\Mmaxbar)$, since
$\mbar\leqslant \Mmaxbar\leqslant \Mtwo$. Therefore
\eqref{claim-sgld-3} applies and gives precisely the
claim of \Cref{th:sgld}.
\end{proof}
\subsection{Evaluation of Lipschitz constants in the 
generalized linear models}

\begin{proof}[Proof of \Cref{prop:glm-constants}]
Throughout the proof, $C$ denotes a positive constant
depending only on $C_g$, whose value may change from 
line to line. We write $\bfS = \sum_{i=1}^n \bX_i
\bX_i^\top$. We shall use the following standard 
consequences of Gaussian concentration and covering 
arguments. With probability at least $1/2$, the 
following bounds hold simultaneously:
\begin{align}
    \sum_{i=1}^n\|\bX_i\|_2^4
    &\leqslant  Cn\,\{{\rm tr}(\bSigma)\}^2,
    \qquad
    \sup_{\|\bu\|_2=1}
    \sum_{i=1}^n(\bX_i^\top\bu)^4
    \leqslant  Cn\|\bSigma\|_{\rm op}^2 .
    \label{eq:glm-event-4}
\end{align}
The Cauchy-Schwarz inequality implies that 
\eqref{eq:glm-event-4} yields
\begin{align}\label{eq:glm-event-1}
    \|\bfS\|_{\rm op}
    &\leqslant  Cn\|\bSigma\|_{\rm op},
    \qquad\qquad\qquad
    \sum_{i=1}^n\|\bX_i\|_2^2
    \leqslant  Cn\,{\rm tr}(\bSigma).
\end{align}
In the sequel, we work on the event on which all
the inequalities above hold.

For every $\btheta\in\mathbb R^p$ and for every 
$j \in\{1,\ldots,p\}$, we have
\begin{align}\label{eq:f''}
    \nabla^2 f(\btheta) = \sum_{i=1}^n
    g_i''(\bX_i^\top\btheta)\bX_i\bX_i^\top 
    \quad\text{and}\quad 
    \partial_{jj}^2 f(\btheta) = \sum_{i=1}^n
    g_i''(\bX_i^\top\btheta)X_{ij}^2. 
\end{align}
Since $|g_i''|\leqslant  C_g$, this 
implies
\begin{align}
    \Mmax \leqslant  C\|\bfS\|_{\rm op}
    \leqslant  Cn\|\bSigma\|_{\rm op}.
    \label{eq:glm-Mmax}
\end{align}
This gives $\Mmax\leqslant Cn$ when $\bSigma=\mathbf I_p$,
and $\Mmax\leqslant Cnp$ when
$\bSigma=\mathbf I_p+\mathbf 1_p\mathbf 1_p^\top$.

Let us now consider $\Mav$. In view of \eqref{eq:f''}, 
for every coordinate $j$, $ M_j\leqslant  C\sum_{i=1}^n 
X_{ij}^2$.  Averaging over $j$ and using the inequality 
${\rm tr}(\bSigma)\leqslant 2p$ gives
\begin{align}
    \Mav = \frac1p\sum_{j=1}^p M_j \leq
    \frac{C}{p}\sum_{i=1}^n\|\bX_i\|_2^2
    \leqslant Cn\,\frac{{\rm tr}(\bSigma)}{p}
    \leqslant Cn.    
    \label{eq:glm-Mav}
\end{align}
Next, the Lipschitz seminorm of $\nabla f_i$ 
defined by $f_i(\btheta)=g_i(\bX_i^\top\btheta)$ 
satisfies
\begin{align}
    \|\nabla f_i\|_{\textrm{Lip}}\leqslant 
    \sup_{\btheta\in\mathbb R^p}\|\nabla^2 f_i
    (\btheta)\|_{\textrm{op}} \leqslant  C\|
    \bX_i\|_2^2 .
\end{align}
Combining this inequality with $\tr(\bSigma)
\leqslant 2p$, we arrive at
\begin{align}
    \Mtwo^2 = \frac1n\sum_{i=1}^n L_i^2
    \leq \frac{C}{n}\sum_{i=1}^n\|\bX_i\|_2^4 
    \leq C \tr(\bSigma)^2\leqslant Cp^2.
    \label{eq:glm-Mtwo}
\end{align}
We turn to $M_\Delta$. Fix a $\btheta\in\mathbb R^p$. 
Define the $p\times n$ matrix $\bfX = [\bX_1,\ldots 
\bX_n]$ and the $n$-vector $\boldsymbol{a}$ with entries 
$a_i = g_i'''(\bX_i^\top\btheta)\|\bX_i\|_2^2$. 
We have 
\begin{align}
    \Delta f(\btheta) = \sum_{i=1}^n g_i''(\bX_i^\top
    \btheta)\|\bX_i\|_2^2 \quad\text{and}\quad
    \nabla\Delta f(\btheta) = \sum_{i=1}^n g_i'''
    (\bX_i^\top\btheta)\|\bX_i\|_2^2\bX_i = 
    \bfX\boldsymbol{a}.
\end{align}
Thus,
\begin{align}
    \|\nabla\Delta f(\btheta)\|_2^2
    & = \|\bfX\boldsymbol{a}\|_2^2\leqslant \|\bfX^\top
    \bfX\|_{\text{op}}\|\boldsymbol{a}\|_2^2 = \|\bfX
    \bfX^\top\|_{\text{op}}\|\boldsymbol{a}\|_2^2
    \leqslant C_g^2\Big(\sum_{i=1}^n\|\bX_i\|_2^4\Big)
    \|\bfS\|_{\rm op}.
\end{align}
Using \eqref{eq:glm-event-1} and \eqref{eq:glm-event-4},
we get
\begin{align}
    \sup_{\btheta\in\mathbb R^p}
    \|\nabla\Delta f(\btheta)\|_2 \leq Cn\,{\rm tr}
    (\bSigma)\|\bSigma\|_{\rm op}^{1/2} \leqslant
    Cnp\,\|\bSigma\|_{\rm op}^{1/2}.
\end{align}
In view of the definition of $M_\Delta$, 
we may take
\begin{align}
    M_\Delta = \frac1{\sqrt{p}} \sup_{\btheta\in
    \mathbb R^p} \|\nabla\Delta f(\btheta)\|_2
    \leq
    Cn\,{\sqrt p}\,
    \|\bSigma\|_{\rm op}^{1/2}.
    \label{eq:glm-Mdelta}
\end{align}
For $\bSigma=\mathbf I_p$, this gives
$M_\Delta\leqslant Cn\sqrt p$. For
$\bSigma=\mathbf I_p+\mathbf 1_p\mathbf 1_p^\top$,
we have $\|\bSigma\|_{\rm op}=p+1\leqslant 2p$ and
${\rm tr}(\bSigma)=2p$, and therefore
$M_\Delta\leqslant Cnp$.

It remains to bound $M_2$. For
$\bh=\btheta'-\btheta$, the mean-value theorem yields
\begin{align}
    \nabla^2 f(\btheta')-\nabla^2 f(\btheta)
    =
    \sum_{i=1}^n
    a_i(\btheta,\btheta')(\bX_i^\top\bh)
    \bX_i\bX_i^\top,
\end{align}
where $|a_i(\btheta,\btheta')|\leqslant  C_g$. Hence
\begin{align}
    \|\nabla^2 f(\btheta')-\nabla^2 f(\btheta)\|_{\rm 
    op}^2 &\leqslant C 
    \sup_{\|\bu\|_2=1} \Big\{\sum_{i=1}^n |\bX_i^\top\bh|
    (\bX_i^\top\bu)^2\Big\}^2\\
    &\leqslant C\sum_{i=1}^n(\bX_i^\top\bh)^2 
    \sup_{\|\bu\|_2=1} \sum_{i=1}^n (\bX_i^\top\bu)^4.
\end{align}
Using \eqref{eq:glm-event-1} and \eqref{eq:glm-event-4},
we obtain
\begin{align}
    M_2
    \leq
    Cn\|\bSigma\|_{\rm op}^{3/2}.
    \label{eq:glm-M2}
\end{align}
Therefore, if $\bSigma=\mathbf I_p$, then
$M_2\sqrt p\leqslant Cn\sqrt p$. If
$\bSigma=\mathbf I_p+\mathbf 1_p\mathbf 1_p^\top$,
then $\|\bSigma\|_{\rm op}\leqslant 2 p$, and hence
$M_2\sqrt p\leqslant Cnp^2$.

Combining \eqref{eq:glm-Mmax}, \eqref{eq:glm-Mav},
\eqref{eq:glm-Mtwo}, \eqref{eq:glm-Mdelta} and
\eqref{eq:glm-M2} gives all the entries of the table.
\end{proof}


\section*{Acknowledgements} 
The work was supported by ERC grant SAGMOS (grant 
agreement No. 101201229).

\bibliographystyle{imsart-number}
\bibliography{bibliography}

\end{document}